\pgfplotsset{compat=1.18}
\newtheorem{remark}{Remark}
\newtheorem{theorem}{Theorem}
\numberwithin{equation}{section}
\newcommand{\vek}[1]{\bm{#1}}
\newcommand{\ten}[1]{\mathbf{#1}}
\newcommand{ \T}{\mbox{\scriptsize $\mathsf{t}$}}
\newcommand{\bomega}{\boldsymbol{\omega}}
\newcommand{\bPhi}{\boldsymbol{\Phi}}
\newcommand{\bvphi}{\boldsymbol{\varphi}}
\newcommand{\bsigma}{\boldsymbol{\sigma}}
\newcommand{\Grad}{{\bf Grad\,}}
\newcommand{\bepsilon}{\boldsymbol{\epsilon}}
\newcommand{\bDiv}{{\bf Div\,}}
\newcommand{\curl}{{\bf curl\,}}
\newcommand{\Tr}[1][]{ \mathrm{tr} \ifthenelse{\equal{#1}{}}{}{#1}}
\renewcommand{\div}[1][]{\mathrm{div} \ifthenelse{\equal{#1}{}}{}{#1}}
\renewcommand{\curl}[1][]{\mathrm{curl} \ifthenelse{\equal{#1}{}}{}{#1}}
\newcommand{\dev}[1][]{ \mathrm{dev}\ifthenelse{\equal{#1}{}}{}{\left\{ {#1} \right\}}  }
\newcommand{\W}{\mathcal{W}}
\renewcommand{\L}{\mathcal{L}}
\newcommand{\R}{\mathbbm{R}}
\newcommand{\I}{\mathbbm{I}}
\newcommand{\F}{\mathcal{F}}
\newcommand{\TT}{\mathcal{T}}
\newcommand{\RT}{\mathcal{R}\mathcal{T}}
\newcommand{\vL}{\vek{L}}
\newcommand{\tL}{\ten{L}}
\newcommand{\vH}{\vek{H}}
\newcommand{\tH}{\ten{H}}
\newcommand{\VHone}{ \vH^1(\Omega, \R^d) }
\newcommand{\vHmone}{ \vH^{-1}(\Omega, \R^d) }
\newcommand{\Ltwo}{ L^2(\Omega, \R) }
\newcommand{\VLtwo}{ \vL^2(\Omega, \R^d) }
\newcommand{\TLtwo}{ \tL^2(\Omega, \R^{d\times d}) }
\newcommand{\Linfty}{ L^{\infty}(\Omega, \R) }
\newcommand{\HDiv}{ \vH(\div;\Omega, \R^d) }
\newcommand{\HCurlDiv}{ \tH(\curl\,\div;\Omega, \R^{d\times d}) }
\newcommand{\vX}{\vek{X}}
\newcommand{\vx}{\vek{x}}
\newcommand{\tI}{\ten{I}}
\newcommand{\vN}{\vek{N}}
\newcommand{\vn}{\vek{n}}
\newcommand{\vT}{\vek{T}}
\newcommand{\vt}{\vek{t}}
\newcommand{\tQ}{\ten{Q}}
\newcommand{\vB}{\vek{B}}
\newcommand{\vP}{\vek{P}}
\newcommand{\tG}{\ten{G}}
\newcommand{\vU}{\vek{u}}
\newcommand{\sU}{u}
\newcommand{\tP}{\ten{P}}
\newcommand{\tF}{\ten{F}}
\newcommand{\vUh}{\tilde{\vek{u}}}
\newcommand{\vUUh}{\vek{U}}
\newcommand{\vR}{\vek{R}}
\newcommand{\tA}{\ten{A}}
\newcommand{\tB}{\ten{B}}
\newcommand{\tC}{\ten{C}}
\newcommand{\tK}{\ten{K}}
\newcommand{\vzero}{\vek{0}}
\newcommand{\tzero}{\ten{0}}
\newcommand{\Pol}{\ensuremath{\mathcal{P}}}
\newcommand{\cof}{\ensuremath{\mathrm{cof}\,}}
\newcommand{\TG}{\bar{\vT}}
\newcommand{\TGn}{\bar{T}_n}
\newcommand{\BG}{\bar{\vB}}
\newcommand{\UG}{\bar{\vU}}
\newcommand{\UGn}{\bar{\sU}_n}
\title{A four-field mixed formulation for incompressible finite elasticity}
\author{Guosheng Fu}
\address{Guosheng Fu, 
Department of Applied and Computational Mathematics and Statistics, University of Notre Dame,
Notre Dame, IN 46556, USA}
\email{gfu@nd.edu}
\author{Michael Neunteufel}
\address{Michael Neunteufel, Fariborz Maseeh Department of Mathematics and Statistics, Portland State University, 1825 SW Broadway, 97201 Portland, Oregon, USA}
\email{mneunteu@pdx.edu}
\author{Joachim Schöberl}
\address{Joachim Schöberl, Institute of Analysis and Scientific Computing, TU Wien,
Wiedner Hauptstr. 8-10, A-1040 Wien, Austria}
\email{joachim.schoeberl@tuwien.ac.at}
\author{Adam Zdunek}
\address{Adam Zdunek, HB Berrit, Solhemsbackarna 73, SE-163 56 Sp\aa nga, Sweden}
\email{adam.zdunek@berrit.se}
\subjclass[2020]{74S05; 74B20} 
\keywords{mixed finite element method; finite elasticity; incompressibility; Hu-Washizu}
\begin{document}

\begin{abstract}
 In this work, we generalize the mass-conserving mixed stress (MCS)
 finite element method for Stokes equations \cite{Gopalakrishnan2019}, involving normal velocity and tangential-normal stress continuous fields, to incompressible finite elasticity. By means of the three-field Hu--Washizu principle, introducing the displacement gradient and 1st~Piola--Kirchhoff stress tensor as additional fields, we circumvent the inversion of the constitutive law. We lift the arising distributional derivatives of the displacement gradient
 to a regular auxiliary displacement gradient field. Static condensation can be applied at the element level, providing a global pure displacement problem to be solved. We present a stabilization motivated by Hybrid Discontinuous Galerkin methods. A solving algorithm is discussed, which asserts the solvability of the arising linearized subproblems for problems with physically positive eigenvalues. The excellent performance of the proposed method is corroborated by several numerical experiments.
 \\
	\vspace*{0.25cm}
	\\
	{\bf{Key words:}} mixed finite element method, finite elasticity, incompressibility, Hu-Washizu.  \\
	
	\noindent
	\textbf{{MSC2020:}} 74S05, 74B20.
\end{abstract}

\maketitle

\section{Introduction}
\label{sec:Introduction}
This paper is concerned with the construction of a numerical robust mixed finite element method in incompressible finite elasticity, extending the pressure-robust mass-conserving mixed stress (MCS) method for Stokes equations \cite{Gopalakrishnan2019,Lederer2019,Gopalakrishnan2020}. 
The Stokes equations governing stationary incompressible fluid mechanics are 
analogous to the Navier--Lam\'{e} equations governing the incompressible linear elasticity. Needless to say, these two fields
span a huge and important part of classic mechanics. Their importance in computational mechanics as equivalent 
 mixed displacement-pressure variational boundary value formulations in the Galerkin--Bubnov sense is widely
recognized. It is also well-understood that if divergence-free test functions were available in general, we could
compute the weakly divergence-free displacement separately. The incompressibility of the displacement fields would guarantee the exact preservation of mass in linear Stokes fluid mechanics and Navier--Lam\'e elasticity. In the absence of ideal choices for the displacement and pressure test spaces, workaround numerical mixed formulations of saddle point problems are forced to carefully match the used spaces to fulfill the celebrated \mbox{\emph{inf-sup}} stability condition. The situation in finite elasticity is more complicated due to the nonlinear form of the volume constraint in finite elasticity.
Several methods have been developed for small and large strain elasticity. In the latter, the nonlinear equations are solved by applying a Newton-Raphson algorithm, linearizing the equation around a configuration. In comparison to linear elasticity, these linearized problems involve terms from the previous configuration making rigorous theoretical a-priori convergence analysis in general unfeasible.

A three-field Hu--Washizu formulation based on an isochoric splitting has been proposed in \cite{SimoTaylorPister85} and recently improved by \cite{schonherrRobustHybridMixed2022} for linear and quadratic hexahedral and tetrahedral meshes. Both of them make use of the Flory split \cite{Flory61} of the deformation gradient. We call them \emph{STP} methods (after the authors Simo, Taylor, and Pister of \cite{SimoTaylorPister85}). Enhanced strain methods \cite{SR90,KT00a,RS95} split the strain additively into a compatible part, associated with the displacement fields, and an enhanced part. Extensions to the nonlinear case have been proposed in \cite{SA92,KT00b}. A two-field Hellinger--Reissner assumed strain element was presented in \cite{VSW19} circumventing the inversion of the nonlinear material law by an iterative computation procedure. Mixed methods based on polyconvex strain energy-functionals were proposed \cite{PB20,BGO15,BGO16,SWB11}, where the deformation gradient, cofactor matrix, and determinant are treated separately.

A locking-free brick element based on reduced integration and stabilization has been proposed in \cite{RWR98}, and the equivalence to the nonlinear enhanced strain methods and B-bar method has been discussed in \cite{R02}. Further, equivalence with Discontinuous Galerkin (DG) methods by choosing a specific matrix-valued stabilization factor was shown \cite{RBW17}. A low-order hybrid Discontinuous Galerkin (HDG) formulation for large deformations on quadrilateral meshes has been derived in \cite{WBAR17}. An adaptive stabilization based on the local value of the elastic moduli within elements for DG in nonlinear elasticity was proposed in \cite{TECL2008}. The choice of stabilization parameters for HDG methods in nonlinear elasticity and thin structures has been discussed in \cite{TNBP2019}.

Based on a Hu--Washizu formulation and Hilbert complexes for nonlinear elasticity \cite{AY2016} compatible strain mixed finite element methods (\emph{CSMFEM}) have been proposed for 2D compressible \cite{ASY17}, incompressible \cite{Shojaei2018}, and 3D compressible and incompressible \cite{Shojaei2019} elasticity. A mixed method for nonlinear elasticity based on finite element exterior calculus was presented in \cite{dhasMixedMethod3D2022}.

(H)DG methods \cite{cockburnNewHybridizationTechniques2005} provide more flexibility compared to methods based on continuous Lagrange displacement fields, preventing several locking problems. However, the involved stability parameter needs to be chosen sophistically to obtain a good balance between stability and accuracy. Mixed methods also entail flexibility, and the hybridized degrees of freedom (DoFs) are often equivalent to HDG methods. However, no stability parameter is needed in the linear case so that mixed methods have the potential to be more robust. For the incompressible Stokes equations, the mass-conserving mixed stress (MCS) formulation proposed in \cite{Gopalakrishnan2019,Lederer2019,Gopalakrishnan2020} entails the same advantages of the H(div)-conforming HDG method \cite{LS15} such as exactly divergence-free solutions and pressure-robustness, but without the necessity of a stabilization parameter. We expect that expanding an exactly divergence-free, pressure-robust, and mass-conserving method will lead to a robust method for nonlinear incompressible elasticity. Therefore, the MCS method is a guideline for this work.

By the already mentioned analogy between
the Stokes equations and the Navier--Lam\'{e} equations, one readily has the corresponding
normal displacement tangential-normal stress continuous  (\emph{NDTNS}) MCS method transferred from the Stokes equations to incompressible linear elasticity. Further transfer by generalization to finite elasticity is not possible in general. For the classic linear theories, the MCS method relies on the 
inversion of the constitutive law. In other words, in the elasticity case it relies on the
compliance strain-stress form of the material law. It can be rewritten in an equivalent three-field mixed formulation including a strain field avoiding the inversion of the constitutive law at the cost of an additional unknown, cf. the discussion in Section~\ref{subsec:small_strain}. Another important characteristic of the MCS method is its relaxation of the regularity of the displacement, from standard 
$\VHone$ to normal continuous $\HDiv$
while strengthening it, from $\VLtwo$ in the classic formulation 
to $\HCurlDiv$ \cite{Lederer2019} for the stress, where henceforth $d$ denotes the dimension. This possibility
is achieved by avoiding eliminating the stress between the material law and the equilibrium 
equation as it is done in the classic displacement-pressure formulations.  
In this work we circumvent the inversion of the material law and keep the flexibility of relaxing/strengthening the regularity of the independent fields by employing the Hu--Washizu approach
\cite{HU55,Washizu1975}. The delicate matching of the spaces for stability and robustness is
addressed in some detail. The considerations are similar to those in the sister tangential-displacement normal-normal stress continuous (TDNNS) method \cite{Neunteufel2020,Neunteufel2021}.
The nonlinearity of the formulation becomes an obstacle. The classic Total Lagrangian formulation (TL) in terms of the energy conjugate 2\textsuperscript{nd} (symmetric) Piola--Kirchhoff stress and the Green-Lagrange strain and its push-forward Updated Lagrangian formulation (UL) add difficulties of multiplying distributions because of the weak regularity of displacement fields \cite[Section 3.2]{Neunteufel2020}. These are avoided using a formulation in terms of the 1\textsuperscript{st} (non-symmetric) Piola--Kirchhoff stress paired with the (non-symmetric) deformation gradient.  

The remainder of the paper is structured as follows. In the next section, we introduce the notation and the basic equations of finite elasticity and derive the Hu--Washizu formulation of our method on the continuous level. Section~\ref{sec:discrete_setting} introduces the mixed finite element method and implementation techniques such as hybridization, static condensation, and reduction of degrees of freedom are discussed. In Section~\ref{sec:solver} we discuss possible stabilization procedures and present a theoretical result that each linearization step within Newton-Raphson is solvable for the case that no negative eigenvalues are expected in the physical problem. Further, we show that the original MCS method is recovered in the small-strain regime. In Section~\ref{sec:Numerical results} we present several benchmark examples to demonstrate the performance of the proposed method. We compare the results with the \emph{CSMFEM} and \emph{STP} methods as well as a Taylor--Hood-like discretization method.

\section{Finite elasticity}
\label{sec:Finite elasticity}

Consider a domain $\Omega\subset \R^d$, $d=2,3$, associated with the reference configuration of the body of interest with a sufficiently smooth boundary $\Gamma = \partial\Omega$ and oriented unit normal $\vN$ at $\vX$ on $\Gamma$.
Let $\vU: \Omega \mapsto \R^d$ 
\[
\vU(\vX):=\bvphi(\vX)-\vX,
\]
denote the displacement vector field, where 
$\bvphi: \Omega \mapsto \R^{d}$ is the deformation mapping. Any time dependence is henceforth suppressed. Further, let 
$J(\Grad\bvphi):=\det(\Grad\bvphi) > 0$ denote the Jacobian mapping (Jacobian determinant) measuring the volume change at $\vX\in\Omega$ in the sense $dv = JdV$. The corresponding tangential mapping is $\Grad\bvphi$. It maps material line elements in the sense  $d\vx=\Grad\bvphi\,d\vX$. The injective linear transformation $\Grad\bvphi$ is also known as the deformation gradient. The relation between the deformation gradient and the displacement gradient  follows from the definition of the material displacement as
\begin{equation}
\Grad\bvphi=\tI+\Grad\vU,
\end{equation}
where $\tI$ is the identity matrix, and $\Grad\bvphi$ and $\Grad\vU$ are two-point tensors.
The material displacement vector field $\vU(\vX)$ will henceforth be a primary variable. The deformation gradient
computed from the displacements will henceforth be represented weakly in the $\TLtwo$-sense
by the auxiliary linear transformation (deformation gradient) denoted $\tF$. It will be accomplished using the like-wise 
auxiliary 1\textsuperscript{st}~Piola--Kirchhoff stress tensor\footnote{Sometimes called the engineering stress tensor since it measures the stress as force per unit reference area.}  denoted $\tP$ as a 
Lagrange multiplier. Given the normal vector $\vN$ in the reference configuration,
we compute the so-called engineering stress vector denoted $\vT$ at $\vX$ as a force per unit referential area $dA$, using the 
1\textsuperscript{st}~Piola--Kirchhoff stress as  $\tP\vN=\vT$.

Henceforth, in the continuous case, we will only allow isochoric exact deformations. Thus, in any deformation $\bvphi$ the volume will
be preserved, i.e.,  $J=1$. In the discrete setting, we will use a Lagrange multiplier to enforce the isochoric constraint.

In the following,
we consider volumetric constraints by response functions $C:\R^{+}\rightarrow\R$ such that
$C(J)=0$ if and only if $J=1$. Examples being,
\begin{equation}
C(J)=J-1 \qquad \text{and}\qquad C(J)=\ln J.
\label{eq:constraint-like-volumetric-response-function}
\end{equation}
The stress reaction to the volumetric constraints \eqref{eq:constraint-like-volumetric-response-function} is the pressure denoted $p$. It is also the energy conjugate variable
in the Hill sense to the volume ratio $J$. 
Throughout, we assume the constitutive material response to be hyperelastic determined by a scalar 
strain energy potential $\W:\R^{d\times d} \rightarrow \R$. We may consider a material non-homogeneity $\W=\W(\vX,\cdot)$, but it is suppressed for transparency. For simplicity we
confine to an isotropic and frame-indifferent (objective) material and use a neo-Hooke like strain energy per unit reference volume,
\begin{equation}
 \widetilde{\W}(\tF) = \dfrac{\mu}{2}(\tF:\tF-d)-\mu\ln\det\tF,
 \label{eq:neo-Hookean-like-strain-energy_old}
\end{equation}
where $\tA:\tB=A_{ij}B_{ij}$ denotes the Frobenius inner product between matrices and $\mu>0$ is the generalized shear modulus. By our assumptions, the energy potential depends on the right Cauchy--Green strain tensor $\tC = \tF^{\T}\tF$, $\widetilde{\W}(\tF)=\overline{\W}(\tC)$. However, as we use $\tF$ as an auxiliary field, we will write the potential in terms of $\tF$ throughout this work. In effect,
combining \eqref{eq:constraint-like-volumetric-response-function} and
\eqref{eq:neo-Hookean-like-strain-energy_old} we get the following\footnote{This formulation is not volumetric-iscochoric decoupled in the sense of \cite{SimoTaylorPister85,Zdunek1986,GAH2000}.} 1\textsuperscript{st}~Piola--Kirchhoff stress,
\begin{equation}
 \tP=-\tilde{p}C^{\prime}(J)\dfrac{\partial\det\tF}{\partial\tF}+\dfrac{\partial\widetilde{\W}}{\partial\tF}(\tF)=-\tilde{p}C^{\prime}(J)J\tF^{-\T} + \mu\left(\tF-\tF^{-\T}\right)=-\tilde{p}C^{\prime}(J)\cof\tF + \mu\left(\tF-\tF^{-\T}\right),
 \label{eq:total-stress-equation}
\end{equation}
where $\cof\tF$ denotes the cofactor matrix of $\tF$.

Using the incompressibility constraint $J=1$ in \eqref{eq:total-stress-equation} yields $C^\prime(J)=1$ in both cases for \eqref{eq:constraint-like-volumetric-response-function} and further
\begin{align}
  \tP = \mu\,\tF-(\tilde{p}+\mu)\tF^{-\T}. 
\end{align}
Thus, absorbing $\mu$ into a new pressure $p= \tilde{p}+\mu$ we can use the simpler and in incompressible elasticity commonly used (see, e.g. \cite{Shojaei2019}) neo-Hooke law
\begin{align}
  \label{eq:neo-Hookean-like-strain-energy}
  \W(\tF) = \frac{\mu}{2}(\tF:\tF-d),\qquad \mu>0
\end{align}
and thus $\tP=\frac{\partial \W}{\partial \tF}(\tF)-pC^\prime(J)\tF^{-\T}$.

\begin{remark}
  We note that for $\widetilde{\W}(\tF)$ the extra stress, the second term in \eqref{eq:total-stress-equation}, is required to 
vanish in the reference configuration, where $\tF=\tI$. The stress reaction $\tilde{p}$ will be determined 
by the equilibrium equations and the boundary conditions and is also required to vanish in the reference configuration, when all sources, fluxes, and prescribed displacements are zero.

Using $\W(\tF)$ instead, the pressure $p$ is, in general, not zero in the absence of external impressions, but $p=\mu$. However, the same displacement solutions are obtained. Further, we observed a significantly improved stability behavior for $\W(\tF)$ compared to $\widetilde{\W}(\tF)$ in the numerical experiments. A possible explanation for the worse performance of $\widetilde{\W}(\tF)$ is that the variations of\, $\ln\det \tF$ yield a more nonlinear and, therefore, complicated behavior for the Newton-Raphson method. A complete investigation is, however, out of scope of this work. Nevertheless, this observation clearly motivates the use of $\W(\tF)$ for the remainder of this work.
\end{remark}

Data to the  variational mechanical boundary value problems we consider, consist of 
the body force $\bar{\vB}=\bar{\vB}(\vX)$ per unit reference volume, 
a prescribed boundary material displacement vector field
$\UG=\UG(\vX)$ on $\Gamma_{D}$, and a boundary stress vector field $\TG=\TG(\vX)$ on $\Gamma_{N}$. We assume that $\TG$ and $\bar{\vB}$ are independent of the displacement $\vU$. For example, in the case of follower loads, one can rewrite the forces with the unknown displacement field in a straightforward manner. The boundary $\Gamma$ is decomposed in the standard fashion as,
$\Gamma=\overline{\Gamma}_{D}\cup\overline{\Gamma}_{N}$ and $\Gamma_{D}\cap\Gamma_{N}=\emptyset$ into the Dirichlet and Neumann boundary. In the following, we will treat e.g. the normal and tangential components of the prescribed displacement $\UG$ separately and thus, the boundary splitting is done twice, $\Gamma=\overline{\Gamma}_{D,n}\cup\overline{\Gamma}_{N,n}$ and $\Gamma=\overline{\Gamma}_{D,t}\cup\overline{\Gamma}_{N,t}$ in the same fashion.

We want to construct a mixed displacement-stress formulation where the normal displacement component 
$\sU_{n}:=\vN\cdot\vU$, with $\vN\cdot\vU=N_iu_i$ denoting the Euclidean inner product, is an essential boundary condition on $\Gamma_{D,n}$. In addition, the tangential-normal part of the 1\textsuperscript{st} Piola--Kirchhoff stress tensor $\tP$, i.e. $\vP_{tn}:= \tQ\tP\vN$, with $\tQ:=\tI-\vN\otimes\vN$ the projection to the tangential components, will be prescribed on $\Gamma_{N,t}$ as essential boundary conditions. The essential boundary conditions are enforced to hold point-wise, whereas the natural boundary conditions involving the tangential part of the displacement and the normal-normal component of the stress are incorporated in a weak sense.

In other words, the displacement space has to allow for the definition of a normal trace, while the stress space has to allow a tangential-normal trace. We conceive that the appropriate space for the displacement $\vU$ with homogeneous essential boundary conditions is the vector space 
$\vH_{\Gamma_{D,n}}(\div,\Omega;\R^{d}):=\{\vU\in \VLtwo\,:\, \div \vU\in \Ltwo,\, \Tr_{n}\vU=0 \text{ on }\Gamma_{D,n}\}$ satisfying homogeneous normal boundary conditions point-wise on $\Gamma_{D,n}$. If $\Gamma_{D,n}=\Gamma$ we will write $\vH_{0}(\div,\Omega;\R^{d})$. Further, 
we will see that the proper space for the 1\textsuperscript{st} Piola--Kirchhoff stress tensor $\tP$ with stress-free essential boundary conditions is the matrix-valued space 
$\vH_{\Gamma_{N,t}}(\curl\,\div, \Omega; \R^{d\times d})=\{ \tP\in\Ltwo\,:\, \curl\,\div\tP\in \vHmone,\,\Tr_{tn}\tP=0 \text{ on }\Gamma_{N,t}\}$ with vanishing tangential-normal components $\vP_{tn}$ on $\Gamma_{N,t}$. Non-homogeneous boundary conditions are handled in the usual manner.
In passing, it is noted that the 1\textsuperscript{st}~Piola--Kirchhoff stress is not symmetric. Further, the auxiliary linear transformation (deformation gradient) $\tF$ is of $\TLtwo$-class. Finally,
the pressure $p$ is of  $\Ltwo$-class.

Notably, we will construct a mixed formulation where the tangential trace of the displacements, a natural boundary condition in our formulation, is imposed 
in a weak sense while the tangential component of the stress vector, an essential Dirichlet boundary condition, is imposed point-wise. This is in contrast to classical elasticity, where the tangential displacement is also imposed point-wise.
The desired mixed variational boundary value formulation is achieved using the Hu--Washizu approach \cite{HU55,Washizu1975}, which provides the necessary flexibility.

The strong formulation of the mechanical boundary value problem for the exactly incompressible case follows. 
Given data $D=\{\BG,\TG,\UG\}$ determine $\{\vU,\tF,\tP,p\}$
such that:
\begin{subequations}
  \label{eq:NDTNS-strong-formulation}
  \begin{align}
    \tP &= -pC^{\prime}(J(\tF))\cof{\tF} + 
    \dfrac{\partial\W(\tF)}{\partial\tF}& &\text{in}\quad \Omega, \label{eq:NDTNS-strong-formulation_a}\\
    \bDiv{\tP}  &= -\BG& &\text{in}\quad \Omega,\label{eq:NDTNS-strong-formulation_b}\\
    C(J(\tF))       &= 0 & &\text{in}\quad \Omega,\label{eq:NDTNS-strong-formulation_c}\\
    \tF         &= \tI + \Grad\vU & &\text{in}\quad \Omega,\label{eq:NDTNS-strong-formulation_d}\\
    \sU_{n} :=\vN\cdot\vU          &\doteq \UGn & &\text{on}\quad \Gamma_{D,n},\label{eq:NDTNS-strong-formulation_e}\\
    \vU_{t} :=\vU-\sU_{n}\vN    &= \UG_t & &\text{on}\quad \Gamma_{D,t},\label{eq:NDTNS-strong-formulation_f}\\
    {P}_{nn} :=\vN\cdot\tP\vN  &= \TGn& &\text{on}\quad \Gamma_{N,n},\label{eq:NDTNS-strong-formulation_g}\\
    \vP_{tn}:=\tQ\tP\vN    &\doteq \TG_{t}& &\text{on}\quad \Gamma_{N,t},\label{eq:NDTNS-strong-formulation_h}
  \end{align}
\end{subequations}
where $\doteq$ denotes the point-wise equality and the volume constraint-like function $C(\cdot)$ is given in \eqref{eq:constraint-like-volumetric-response-function}. Further, here the strain energy function $\W(\cdot)$ is specified by \eqref{eq:neo-Hookean-like-strain-energy}. 
The spaces for the independent variables and their test functions are presented in Table~\ref{tab:function-spaces}. Note that the 1\textsuperscript{st}~Piola--Kirchhoff stress 
$\tP$ will be acting as a tensor-valued Lagrange multiplier to enforce the equality
\eqref{eq:NDTNS-strong-formulation_d}. 

 \begin{table}[!ht]
 \begin{center}
  \begin{tabular}{lll}\hline\\
    total displacement, $\vU(\vX)$, & $\vH(\div)$,  & $\R^{d}$ \\
   auxiliary 1\textsuperscript{st}~Piola--Kirchhoff stress, $\tP$, & $\tH(\curl\,\div)$, & $\R^{d\times d}$\\
   auxiliary tangent map, $\tF$, &  $\tL^{2}$, &  $\R^{d\times d}$\\ 
    stress reaction, pressure, $p$, &  ${L}^{2}$, &  $\R$\\
   \hline
  \end{tabular}
   \caption{Function space setting. None of the auxiliary variables $\tF$, $\tP$, or $p$ are computed  (derived) from the referential displacement $\vU$.
   \label{tab:function-spaces}}
 \end{center}
\end{table}

Next we consider the variational boundary value formulation corresponding to  \eqref{eq:NDTNS-strong-formulation}, which reads:\\
For given data $D=\{\BG,\TG,\UG\}$ find $(\vU,\tF,\tP,p)\in \HDiv\times \TLtwo\times \HCurlDiv\times \Ltwo$ such that  $\sU_n = \UGn$, $\vP_{tn}= \TG_{t}$, and for all $(\delta\vU,\delta\tF,\delta\tP,\delta p)\in \vH_{\Gamma_{D,n}}(\div;\Omega,\R^d)\times \TLtwo\times \vH_{\Gamma_{N,t}}(\curl\,\div;\Omega,\R^{d\times d})\times \Ltwo$ there holds
\begin{subequations}
	\label{eq:NDTNS-bvp}
	\begin{alignat}{5}
	&\int_{\Omega}\Big(\frac{\partial\W}{\partial\tF}(\tF)[\delta \tF] &&-  \tP:\delta\tF &&&&-p\,C'(J(\tF))\cof \tF:\delta\tF\Big)\,dV&&= 0,\\
	& \int_{\Omega}(\tI-\tF):\delta\tP\,dV &&&&+\langle \Grad\vU,\delta\tP\rangle &&&&=L^{\ast}_{\mathrm{ext}}(\delta\tP),\label{eq:NDTNS-bvp_dP}\\
	& &&\langle \Grad\delta\vU,\tP\rangle &&&&&&=L_{\mathrm{ext}}(\delta\vU),\label{eq:NDTNS-bvp_dU}\\
	& \int_{\Omega}-\delta p\, C(J(\tF))\,dV&&&&&&&&=0,\label{eq:NDTNS-bvp_dp}
 	\end{alignat}
\end{subequations}
where
\begin{align}
L^{\ast}_{\mathrm{ext}}(\delta\tP) = \int_{\Gamma_{D,t}}\UG_t\cdot\delta\vP_{tn}\,dA\quad \text{ and }\quad
L_{\mathrm{ext}}(\delta\vU) =\int_{\Omega}\BG\cdot \delta\vU\,dV + \int_{\Gamma_{N,n}}\TGn \delta\sU_n\,dA.\label{eq:ext_forces}
\end{align}

\begin{remark}[Nearly incompressible materials]
    By adding $-\frac{1}{\kappa}p\,\delta p$ to \eqref{eq:NDTNS-strong-formulation_c} and \eqref{eq:NDTNS-bvp_dp}, where $\kappa>0$ denotes the bulk modulus, the method can directly incorporate nearly incompressible materials. However, we focus on the challenging incompressible case throughout the paper.
\end{remark}

Note that only the normal component of the stress vector $\TG$ appears in \eqref{eq:NDTNS-bvp_dU} as the tangential part is set as an essential boundary condition for the 1\textsuperscript{st}~Piola--Kirchhoff stress tensor $\tP$. On the other hand also only the normal component of the prescribed displacement $\UG$ on the Dirichlet boundary is incorporated strongly in the formulation; the tangential components are satisfied weakly due to the right-hand side of \eqref{eq:NDTNS-bvp_dP}. 

The pairing $\langle \tP,\Grad \vU\rangle$ in \eqref{eq:NDTNS-bvp} cannot be understood as an $L^2$-integral since the $\HDiv$ space only guarantees that the divergence is square integrable, but not the individual components of the gradient. As discussed and analyzed in \cite{Gopalakrishnan2019,Lederer2019} the members in the function space $\HCurlDiv$ can be paired with the gradients of members in $\HDiv$ interpreted as functionals. This means that the gradient of functions in $\HDiv$ is included in the topological dual space of $\HCurlDiv$, denoted by $\HCurlDiv^*$. Furthermore, the divergence of functions in $\HCurlDiv$ is in the dual space of $\HDiv$. Thus, we interpret the pairing as a so-called duality pairing, where a functional acts on a suitable function 
\begin{align}
    \langle \tP,\Grad \vU\rangle_{\vH(\curl\,\div)\times \vH(\curl\,\div)^*} = -\langle\bDiv{\tP},\vU\rangle_{\vH(\div)^*\times \vH(\div)},\label{eq:duality_pairing_cont}
\end{align}
which we abbreviate with $\langle \tP,\Grad \vU\rangle = -\langle\bDiv{\tP},\vU\rangle$. Strictly speaking, we required here that $\tP\in \vH_{\Gamma_{N,t}}(\curl\,\div, \Omega; \R^{d\times d})$ and $\vU\in \vH_{\Gamma_{D,n}}(\div, \Omega; \R^{d})$ with $\Gamma_{N,t}\cup \Gamma_{D,n}=\Gamma$. Then the boundary terms arising from integration by parts vanish in \eqref{eq:duality_pairing_cont}. 
However, in Section~\ref{subsec:finite_elements}, we discuss this abstract concept of duality pairings in terms of finite elements accounting for boundary conditions in detail. Therein, the pairing translates to piece-wise classical integrals involving element and boundary terms.
 
We further emphasize that, strictly speaking, taking  $\tF\in \TLtwo$, does in general not provide enough regularity to guarantee that  \eqref{eq:NDTNS-strong-formulation_a} is well-defined as $C^\prime(J(\tF))\cof\tF$ is at least quadratic in $\tF$. 
Nevertheless, as we will consider finite elements consisting of piece-wise polynomials, which are always in $\Linfty$ for bounded domains $\Omega$, the question of well-posedness due to the multiplication of functions does not arise.
 
The variational problem \eqref{eq:NDTNS-bvp} can also be written in terms of a Lagrange functional
\begin{align}
\label{eq:lagr_func_pressure}
\L(\vU,\tF,\tP,p)&=\int_{\Omega}\left(\W(\tF) - p\,C(J(\tF)) \right)\,dV - \langle \tF-\tI-\Grad \vU,\tP\rangle - W_{\mathrm{ext}}(\vU,\tP),
\end{align}
where we used that $\langle \tF,\tP\rangle=\int_{\Omega}\tF:\tP\,dV$ reduces to the usual $L^2$ inner product and that with \eqref{eq:ext_forces}
\begin{align}
W_{\mathrm{ext}}(\vU,\tP)=L^{\ast}_{\mathrm{ext}}(\tP) +
L_{\mathrm{ext}}(\vU).\label{eq:ext_work}
\end{align}

Formulation \eqref{eq:lagr_func_pressure} has the form of a saddle point problem. In the discretized case, we will use hybridization techniques enabling us to statically condense out $\tF$, $\tP$, and $p$ in \eqref{eq:lagr_func_pressure} such that only displacement-based unknowns are left. 

\begin{remark}[Lifting of distributional gradient]
    In \eqref{eq:lagr_func_pressure} the deformation gradient field $\tF$ can be interpreted as an $L^2$ Riesz representative of the distributional gradient $\Grad \vU$ for $\vU\in \vH(\div;\Omega,\R^d)$, which is enforced by the Lagrange multiplier $\tP$. As we increase the regularity from a distribution to $L^2$ we also call $\tF$ the lifting of $\Grad \vU+\tI$.
\end{remark}

\section{Discrete setting}
\label{sec:discrete_setting}
In this section we describe the corresponding finite element spaces used in formulation \eqref{eq:lagr_func_pressure}, including hybridization and the involved duality pairing. 

First, we assume a triangulation $\TT=\{T\}$ of $\Omega$ consisting of (possibly polynomial curved) triangles or tetrahedrons in 2D or 3D, respectively. The set of all facets, edges in two dimensions and faces in 3D, also called the skeleton, is denoted by $\F=\{F\}$. The set of all piece-wise polynomials up to order $k$ on the elements and facets is given by $\Pol^k(\TT)$ and $\Pol^k(\F)$, respectively. Further, we denote with $\tilde{\Pol}^k(\TT)$ the set of all homogeneous polynomials of order $k$.

\subsection{Finite elements}
\label{subsec:finite_elements}
For the displacement field $\vU$ the $\vH(\div)$-conforming, vector-valued, and normal continuous Raviart--Thomas (RT) \cite{RT77} or Brezzi--Douglas--Marini (BDM) \cite{BDM85} elements can be used. We will use RT to get a better approximation for the divergence without introducing additional coupling degrees of freedom. Further, by post-processing techniques, we can construct a displacement field with increased $\Ltwo$-convergence rate, cf. \cite{FJQ2019},
\begin{align}
  \label{eq:fespace_rt}
  \begin{split}
    &\RT^k:=\{ \vU\in \HDiv\, :\, \vU|_T \in \Pol^k(T,\R^d)+\tilde{\Pol}^k(T)\,\vX\,\forall T\in\TT
      \,\wedge\, \llbracket \vU\cdot \vN\rrbracket_F=0\,\forall F\in \F_{\mathrm{int}} \},\\
    &\RT^k_{\Gamma}:=\{ \vU\in \RT^k\, :\, \sU_{n}=0 \text{ on } \Gamma \},
  \end{split}
\end{align}
where $\vX=(X_1,\dots,X_d)$ and the jump $\llbracket\cdot\rrbracket_F$ is defined as follows. We fix for each facet $F\in\F$ a unique $\vN_F$ as facet normal, which coincides with the outer normal vector on the boundary of the domain. Without loss of generality we assume that the element normal vector $\vN_{T_1}=\vN_F$ and therefore we define
\begin{align}
\llbracket \vU\cdot\vN\rrbracket_F := \vU|_{T_1}\cdot \vN_{T_1}+\vU|_{T_2}\cdot \vN_{T_2} = (\vU|_{T_1}-\vU|_{T_2})\cdot \vN_{F}\label{eq:def_normal_jump}
\end{align}
for inner facets $F\in \F_{\mathrm{int}}$ and on boundaries simply $\llbracket \vU\cdot\vN\rrbracket_F:= \vU\cdot\vN_F$, $F\in \F_{\mathrm{out}}$. If no misunderstandings are possible, we neglect the subscript $F$ of the jump brackets for ease of notation. 

A precise construction of a basis of \eqref{eq:fespace_rt} can be found e.g. in \cite{BBF13,Zaglmayr06}. An important difference to standard nodal elements is the mapping from the reference element $\hat{T}$ to a physical one $T$ used to preserve the normal continuity: If a normal continuous $\hat{\vU}_h$ is defined on the reference simplex $\hat{T}$ and $\bPhi:\hat{T}\to T$ denotes the mapping to the physical element, then the function $\vU_h$ defined on $T$ via the Piola transformation, see e.g. \cite[Section 2.1.3]{BBF13},
\begin{align}
\vU_h\circ\bPhi:= \frac{1}{\det\tG}\tG\,\hat{\vU}_h,\qquad \tG:=\Grad_{\hat{\vx}} \bPhi
\end{align}
is again normal continuous.

For the $L^2$-conforming finite element space piece-wise polynomials of order $k$ are used
\begin{align}
Q_h^k:=\Pol^k(\TT),\label{eq:fespace_l2}
\end{align}
which can easily be constructed by using e.g. a 2D or 3D Dubiner basis, respectively, cf. e.g. \cite[Sections 5.2.3 \& 5.2.6]{Zaglmayr06}. This space is considered for the pressure field $p$. 

By adding copies of $Q_h^k$ also the auxiliary deformation gradient $\tF$ could be discretized, $\tF_h\in [Q_h^{k}]^{d\times d}$. Nevertheless, a second approach of discretizing $\tF$ will be presented, giving beneficial properties discussed below.
 
The 1\textsuperscript{st}~Piola--Kirchhoff stress tensor $\tP$ is going to be discretized in the recently proposed matrix-valued tangential-normal continuous finite element space for $\HCurlDiv$
\begin{align}
\Sigma_{h,\Gamma}^k:=\{ \tP_h\in \Pol^k(\TT,\R^{d\times d})\,:\, \llbracket\tP_{h,tn}\rrbracket_F=0 \text{ for all } F\in \F,\,  \vP_{h,tn}=0\text{ on } \Gamma\}.\label{eq:fespace_hcd}
\end{align}
An explicit basis for $\Sigma_h^k$ can be found in \cite{Gopalakrishnan2019,Lederer2019}. We emphasize that the construction is based on dyadic products of barycentric coordinates such that the tangential-normal components are zero on all except for one facet on the reference element $\hat{T}$ leading to tangential-normal continuous elements. Higher polynomial orders are achieved by a hierarchical basis spanning the appropriate polynomial space. A different approach for constructing basis functions based on polytopal templates was presented in \cite{SNHZ2024}. Further, an explicit splitting between trace-free basis functions and others with non-zero traces can easily be accomplished. After defining the basis functions on the reference element, they are mapped onto the physical element $T$ with a covariant transformation \cite{Nedelec1986,BBF13} from the left and a Piola mapping from the right guarantying the tangential-normal continuity also on the physical element
\begin{align}
\tP_h\circ\bPhi:=\frac{1}{\det\tG}\tG^{-\T}\hat{\tP}_h\tG^{\T},\qquad \tG=\Grad_{\hat{\vx}}\bPhi,\quad \bPhi:\hat{T}\to T.\label{eq:trafo_ref_el_hcd}
\end{align}
The duality pairing $\langle \bDiv{\tP_h},\vU_h\rangle$ \eqref{eq:duality_pairing_cont} can be evaluated on a triangulation $\TT$ if $\tP_h\in \Sigma_h^{k}$ and $\vU_h\in \RT^k$, see e.g. \cite{Gopalakrishnan2019},
\begin{subequations}
\label{eq:duality_pairing}
\begin{align}
  \langle \bDiv{\tP_h},\vU_h\rangle_{\TT}&:= \sum_{T\in\TT}\int_T \bDiv{\tP_h}\cdot \vU_h\,dV -\sum_{F\in\F}\llbracket P_{h,nn}\rrbracket \sU_n\,dA\label{eq:duality_pairing_a}\\
  &=-\sum_{T\in\TT}\int_T \tP_h: \Grad\vU_h\,dV +\sum_{F\in\F}\vP_{h,tn}\cdot \llbracket \vU_t\rrbracket\,dA =: -\langle \tP_h,\Grad\vU_h\rangle_{\TT}.\label{eq:duality_pairing_b}
  \end{align}
\end{subequations}

The gap between the auxiliary displacement gradient $\tF_h-\tI$ and the actual displacement gradient $\Grad\vU_h$ is paired in the form $\langle \tF_h-\tI-\Grad\vU_h,\tP_h\rangle_{\TT}$, where we again use that $\langle\tF_h,\tP_h\rangle_{\TT}=\int_{\Omega}\tF_h:\tP_h\,dV$. Let us assume an affine\footnote{I.e., each element is a non-curved triangle or tetrahedron in 2D or 3D, respectively, and the Jacobian of the affine mapping $\Phi$ from the reference to the physical element is constant.} triangulation for the moment. Then the gradient of an $\vH(\div)$-conforming finite element function transforms to the reference element by
\begin{align}
(\Grad_{\vX}\vU_h)\circ\bPhi=\frac{1}{\det\tG}\tG\,\Grad_{\hat{\vx}}\hat{\vU}_h\tG^{-1}\label{eq:trafo_grad_hdiv}
\end{align}
as the gradient transforms covariantly and the $\vH(\div)$-conforming functions with the Piola transformation. Comparing with \eqref{eq:trafo_ref_el_hcd}, we deduce that the transformations are equal besides one transposition. This motivates to discretize the discontinuous deformation gradient field $\tF$ by the space consisting of transposed functions of \eqref{eq:fespace_hcd}. Then both $\Grad \vU$ and $\tF$ are mapped in the same way. However, we require $\tF$ to be only in $\Ltwo$ without tangential-normal continuity. To this end, we break the tangential-normal continuity of $\Sigma_h^k$ 
by ``doubling'' the corresponding coupling degrees of freedom and interpreting them as independent internal ones. The resulting space is denoted by $\Sigma_h^{k,\mathrm{dc}}$ (dc=discontinuous). With 
\begin{align*}
\tF_h\in F_h^k:=(\Sigma_h^{k,\mathrm{dc}})^{\T}:=\{\tA^{\T}\,:\, \tA\in \Sigma_h^{k,\mathrm{dc}}\}
\end{align*}
the deformation gradients transform equally with the gradient of the displacement field $\vU_h$.\\

\begin{remark}
  As the identity matrix $\tI$ is in the space $F_h^0$, it is equivalent to consider the displacement gradient $\tK:=\Grad \vU$ as an auxiliary field instead of the deformation gradient $\tF=\Grad\bvphi$ \cite{Shojaei2019}.
\end{remark}

\begin{remark}[Continuity of displacement and stress field]
The normal component $\sU_n=\vU\cdot \vN$ of the displacement and the tangential-normal components $\tQ\tP\vN$ of the 1\textsuperscript{st} Piola--Kirchhoff stress tensor are continuous on the triangulation by construction. Physically, the whole displacement and the stress vector $\tP\vN$ should be continuous. The continuity of these components is enforced in a weak sense by the duality pairing \eqref{eq:duality_pairing}. The term $\sum_{F\in\F}\llbracket P_{h,nn}\rrbracket \sU_n\,dA$ in \eqref{eq:duality_pairing_a} enforces the continuity of the missing normal-normal component of $\tP$ and $\sum_{F\in\F}\vP_{h,tn}\cdot \llbracket \vU_t\rrbracket\,dA$ in \eqref{eq:duality_pairing_b} yields a weakly tangential continuous displacement field. Further, \eqref{eq:duality_pairing} shows that the boundary conditions \eqref{eq:NDTNS-strong-formulation_f} and \eqref{eq:NDTNS-strong-formulation_g} are fulfilled in a weak sense together with \eqref{eq:ext_forces}. For details, we refer to \cite{Gopalakrishnan2019,Lederer2019}.
\end{remark}

\subsection{Discrete formulation}
\label{subsec:discrete_formulation}
With all ingredients at hand, we are now in the position to formulate the discretized version of \eqref{eq:lagr_func_pressure}. For given data $D=\{\BG,\TG,\UG\}$ find $(\vU_h,\tF_h,\tP_h,p_h)\in \RT^k\times F_h^{k}\times \Sigma_h^{k}\times Q_h^{k}$ such that $\sU_{h,n} = \UGn$, $\vP_{h,tn}= \TG_{t}$, and solving the saddle-point problem
\begin{align}
\label{eq:disc_lagr_func_pressure}
\L(\vU_h,\tF_h,\tP_h,p_h)&=\int_{\Omega}\left(\W(\tF_h) - p_h\,C(J(\tF_h))\right)\,dV - \langle \tF_h-\tI-\Grad \vU_h,\tP_h\rangle_{\TT} - W_{\mathrm{ext}}(\vU_h,\tP_h).
\end{align}
The specific choice of polynomial orders for \eqref{eq:disc_lagr_func_pressure} are motivated by the Stokes equations \cite{FJQ2019,Gopalakrishnan2019}, which are obtained from \eqref{eq:disc_lagr_func_pressure} in the small-strain regime, see also Section~\ref{subsec:small_strain} below.

\subsection{Hybridization, stabilization, and static condensation}
System \eqref{eq:disc_lagr_func_pressure} is a saddle point problem such that an indefinite system after assembling would be obtained. We can, however, use hybridization techniques resulting in solely displacement-based coupling degrees of freedom (DoFs) regaining a minimization problem. The  DoFs of $\tF_h\in F_h^k$ and $p_h\in Q_h^k$ are all internal ones without coupling to other elements. Only $\vU_h\in \RT^k$ and $\tP_h\in \Sigma_h^k$ involve coupling DoFs. Thus, to enable hybridization, the tangential-normal continuity of the stress space is broken, $\tP_h\in\Sigma_h^{k,\mathrm{dc}}$, and the continuity gets reinforced weakly by means of a Lagrange multiplier $\vUh_h$ living as vector field only on the skeleton $\F$. More precisely, $\vUh_h$ is a vector-valued function with zero normal component, i.e., only its tangential components enter the equations, $\vUh_h = \vUh_{h,t}$, and lives in the finite element space 
\begin{align}
\Lambda_{h,\Gamma}^k:=\{\vUh_h\in \Pol^k(\F,\R^d)\,:\, \vUh_h\cdot\vN_F=0 \text{ for all } F\in \F, \vUh_{h,t}=0\text{ on }\Gamma\}.\label{eq:fespace_hyb}
\end{align}
One can define the tangential continuous facet element $\hat{\vUh}_h$ on the reference element $\hat{T}$. Then, by use of the covariant transformation 
\begin{align*}
\vUh_h\circ\bPhi:= \tG^{-\T}\hat{\vUh}_h
\end{align*}
the tangential continuity gets preserved, comparable to N\'ed\'elec finite elements \cite{Nedelec1986}. Note that in two dimensions $\Lambda_{h}^k$ coincides with a to N\'ed\'elec finite elements of the second kind, where the interior degrees of freedom are set to zero. In three dimensions, however, $\Lambda_{h}^k$ lives only on the facets, whereas N\'ed\'elec elements start with edge degrees of freedom. Thus, these spaces are different.

The hybridized problem then reads: For given data $D=\{\BG,\TG,\UG\}$ find $(\vU_h,\vUh_h,\tF_h,\tP_h,p_h)\in \RT^k\times\Lambda_h^{k}\times F_h^{k}\times \Sigma_h^{k,\mathrm{dc}}\times Q_h^{k}$ such that  $\sU_{h,n} = \UGn$ on $\Gamma_{D,n}$, $\vUh_{h,t}= \UG_{t}$  on $\Gamma_{D,t}$ and solving the saddle-point problem
\begin{align}
\label{eq:disc_lagr_func_hyb}
\L^{\mathrm{hyb}}(\vU_h,\vUh_h,\tF_h,\tP_h,p_h)&=\int_{\Omega}\left(\W(\tF_h) - p_h\,C(J(\tF_h)) \right)\,dV - \langle \tF_h-\tI-\Grad \vU_h,\tP_h\rangle_{\TT} \nonumber\\
& - W^{\mathrm{hyb}}_{\mathrm{ext}}(\vU_h,\vUh_h) + \sum_{T \in \TT}\int_{\partial T}\vP_{h,tn}\cdot\vUh_{h,t}\,dA,
\end{align}
where
\begin{align}
W^{\mathrm{hyb}}_{\mathrm{ext}}(\vU,\vUh):=\int_{\Omega}\BG\cdot \vU\,dV + \int_{\Gamma_{N,n}}\TGn\, \sU_n\,dA + \int_{\Gamma_{N,t}}\TG_t\cdot\vUh_{t}\,dA.\label{eq:ext_work_hyb}
\end{align}
Note that the essential and natural boundary conditions are swapped when going to the hybridized version as the tangential component of the prescribed displacement is also incorporated point-wise, whereas the stress vector is now treated completely in a weak sense, cf. \cite{BBF13}. We emphasize that the hybridized system \eqref{eq:disc_lagr_func_hyb} is equivalent to its original version \eqref{eq:disc_lagr_func_pressure}.

For completeness, we discuss in detail how the Lagrange multiplier enforces the tangential-normal continuity of $\tP_h$. With the given fixed facet normal vector $\vN_F$ from the definition of the normal jump \eqref{eq:def_normal_jump} the jump of the tangential-normal stresses $\vP_{h,tn}= (\tI-\vN\otimes \vN)\tP_h\vN$ is given by $\llbracket\tP_{h,tn}\rrbracket=(\tI-\vN_F\otimes \vN_F)\tP_h|_{T_1} \vN_{F}-(\tI-\vN_F\otimes \vN_F)\tP_h|_{T_2} \vN_{F}$, as the sign of $\tP_{h,tn}$ changes over elements. Note, that in contrast the sign of $\vUh_{h,t}=(\tI-\vN_F\otimes \vN_F)\vUh_{h}$ does not change. If the surface integrals in \eqref{eq:disc_lagr_func_hyb} are re-ordered facet by facet, there are two contributions to each internal facet with normal vectors $\vN$ of opposite direction. Thus, we observe that
\begin{align}
\sum_{T \in \TT} \int_{\partial T} \vP_{h,tn}\cdot\vUh_{h,t}\, dA &= \sum_{F \in \F_{\mathrm{int}}} \int_{F} \llbracket\vP_{h,\vt_F\vn_F}\rrbracket \cdot\vUh_{h,\vt_F}\, dA+\sum_{F \in \Gamma_{N,t}} \int_{F} \vP_{h,tn} \cdot\vUh_{h,t}\, dA.\label{eq:jump_hyb}
\end{align}
The first term on the right-hand side of \eqref{eq:jump_hyb} enforces the tangential-normal continuity of $\tP$ and the second, together with $\int_{\Gamma_{N,t}}\TG_t\cdot \vUh_{h,t}\,dA$ in \eqref{eq:ext_work_hyb}, that $\tP_h$ fulfills the Neumann boundary condition $\vP_{h,tn}=\TG_{t}$. This underlines that $\Gamma_{N,t}$ is handled as a natural boundary condition in the hybridized version. Further, \eqref{eq:jump_hyb} combined with the boundary term $\int_{F}\vP_{h,tn}\vU_{h,t}\,dA$ in \eqref{eq:duality_pairing} shows that the Lagrange multiplier $\vUh_h$ has the physical meaning of the tangential component of the displacement $\vU$ \cite{Gopalakrishnan2019}.

More details on hybridization techniques can be found, e.g. in \cite{BBF13}. Now a Schur complement can be performed in every linearization step to locally eliminate $\tF_h$, $\tP_h$, and $p_h$. Details are given in Section~\ref{sec:solver}.

To improve the robustness of the method in the large deformation regime, we suggest adding the following stabilization term well-known from hybridized discontinuous Galerkin (HDG) methods \cite{TNBP2019} 
\begin{align}
  \label{eq:stab_hdg}
  \sum_{T\in\TT}\int_{\partial T}\frac{\tau}{2}\|(\vU_h-\vUh_h)_t\|^2\,dA,
\end{align}
where $\tau>0$ is the stabilization parameter, typically taken to be of order $\mathcal{O}(1/h)$. The effects of $\tau$ on the solution are discussed in Section~\ref{subsubsec:Inflation shell}. 

\subsection{Saving internal DoFs}
\label{subsec:save_DoFs}
When looking at the lifting $\langle \tF_h-\tI-\Grad\vU_h,\tP_h\rangle_{\TT}$ we observe that the full gradient $\Grad\vU_h$ gets lifted to the more regular auxiliary deformation gradient $\tF_h$. However, the divergence of $\vU_h\in \RT^k$ is already a regular function and thus, does not need to be explicitly lifted. Therefore, and as the $\HCurlDiv$ finite element space allows the explicit splitting into trace-free and spherical matrix-valued functions \cite{Gopalakrishnan2019,Lederer2019}, we can define the full new deformation gradient as
\begin{align}
\tF_h = \tF_{\dev,h}+\frac{1}{d}\div{(\vU_h)}\tI+\tI
\end{align}
and only perform the lifting on the trace-free (deviatoric) part $\langle \tF_{\dev,h}-\dev(\Grad \vU_h),\tP_{\dev,h}\rangle_{\TT}$. Here, $\dev(\tA)=\tA-\frac{1}{d}\Tr(\tA)\tI$ denotes the trace-free part of a matrix $\tA$ with $\Tr(\tA) = A_{ii}$. Note that also for the 1\textsuperscript{st}~Piola--Kirchhoff stress tensor only the trace-free part is then needed, saving in total $2\times\dim\Pol^k(\TT)$ local DoFs. Further, due to the orthogonality of the trace-free and spherical part, we can neglect the trace-free operator in the lifting of $\dev(\Grad \vU_h)$.

We emphasize that the spherical part of the 1\textsuperscript{st}~Piola--Kirchhoff stress tensor can be recovered by the postprocessing procedure
\begin{align}
\frac{\partial(\W(\tF_h)-p_hC(J(\tF_h)))}{\partial (\frac{1}{d}\div{(\vU_h)}\tI+\tI)}= \tP_{\Tr,h},\qquad \tP_h= \tP_{\dev,h}+\tP_{\Tr,h}.
\end{align}

\subsection{Final element}

\begin{figure}[ht!]
	\centering
	\includegraphics[width=0.2\textwidth]{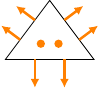}\hfill
	\includegraphics[width=0.2\textwidth]{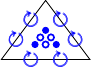}\hfill
	\includegraphics[width=0.18\textwidth]{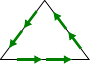}\hfill
	\includegraphics[width=0.18\textwidth]{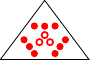}\hfill
	\includegraphics[width=0.18\textwidth]{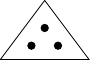}
	\caption{DoFs for lowest-order (hybridized) element $k=1$ on triangle. From left to right: $\vU\in \RT^1$, $\tP\in\Sigma_h^1$ (empty circle are spherical DoFs), $\vUh\in \Lambda_h^1$, $\tF\in F_h^1=(\Sigma_h^{1,\mathrm{dc}})^{\T}$ (empty circle are spherical DoFs), and $p\in Q_h^1$.}
	\label{fig:DoFs_element}
\end{figure}
In Figure~\ref{fig:DoFs_element} the DoFs for a single triangle for the lowest-order elements $k=1$ are visualized. For the non-hybridized element there are 35 total DoFs, whereas only 12 are coupling ones. With the techniques described in Section~\ref{subsec:save_DoFs} the number of local DoFs can further be reduced as listed in Table~\ref{tab:DoFs_combinations} for a triangle and tetrahedron for $k=1$ and $k=2$, respectively. For hybridization, all DoFs for the 1\textsuperscript{st}~Piola--Kirchhoff stress tensor $\tP\in \Sigma_h^k$ become local ones, $\tP\in \Sigma_h^{k,\mathrm{dc}}$ and the tangential facet finite element space $\Lambda_h^k$ \eqref{eq:fespace_hyb} takes the coupling DoFs from $\Sigma_h^k$. Further, by using the following relations between the number of vertices $V$, edges $E$, faces $F$, and elements $T$ in 2D and 3D for a structured mesh of a quadrilateral and cube domain, respectively,
\begin{subequations}
\label{eq:asymp_relation}
\begin{align}
        &3\#V \approx \# E,\quad 2\#V \approx \#T &\text{ in 2D},\\
&7\#V \approx \# E,\quad 12\# V \approx \# F,\quad 6\# V \approx \# T &\text{ in 3D},
\end{align}
\end{subequations}
we present the asymptotic number of DoFs on the domain with respect to the number of vertices of the mesh in Table~\ref{tab:DoFs_combinations}.
\begin{table}[!ht]
	\begin{center}
    \begin{tabular}{c|cccc}
			triangle &full &  reduced & asymptotic (reduced) \\
			\hline
			$k=1$ & 41/12 &  35/12 & 58/12\#V \\
			$k=2$ & 78/18 &  66/18 & 114/18\#V \\
		\end{tabular}\hspace*{0.3cm}
		\begin{tabular}{c|cccc}
			tetrahedron &full &  reduced & asymptotic (reduced) \\
			\hline
			$k=1$ & 115/36 &  107/36 & 534/108\#V  \\
			$k=2$ & 274/72 &  254/72 & 1308/216\#V \\
		\end{tabular}
		\caption{Total/coupling DoFs for polynomial order $k=1,2$ on a triangle (left) and tetrahedron (right) for the hybridized full or reduced method. The asymptotic number of DoFs with respect to a triangulation and the number of vertices are presented.
			\label{tab:DoFs_combinations}}
	\end{center}
\end{table}

\section{Solver}
\label{sec:solver}

In this section we will neglect the subscript $h$ for finite element functions for ease of notation. To avoid possible misunderstandings, the finite element spaces, however, will still be indicated explicitly. Further, not to complicate the presentation, we consider the full system, including spherical DoFs, as the reduced system described in Section~\ref{subsec:save_DoFs} is equivalent to the full one. 

Combining the hybridized Lagrangian \eqref{eq:disc_lagr_func_hyb} with stabilization \eqref{eq:stab_hdg} we focus on the solvability of the following Lagrangian in this section
\begin{align}
\label{eq:disc_lagr_func_hyb_stab}
\L^{\tau}(\vU,\vUh,\tF,\tP,p)&:=\L^{\mathrm{hyb}}(\vU,\vUh,\tF,\tP,p)+\sum_{T\in\TT}\int_{\partial T}\frac{\tau}{2}\|(\vU-\vUh)_t\|^2\,dA.
\end{align}

\subsection{Small strain regime}
\label{subsec:small_strain}
For completeness and to reveal the strong connection to the MCS formulation for the Stokes problem \cite{Lederer2019} we also present the boundary value problem in the small strain setting. Then, $\tP\rightarrow\bsigma$ becomes symmetric and thus, we have to use an additional Lagrange multiplier $\bomega\in [Q_h^{k-1}]^{d\times d}_{\mathrm{skew}}$ to enforce weakly the symmetry of $\bsigma\in \Sigma_h^k$ as described detailed in \cite{Gopalakrishnan2020}. Further, the nonlinear material law is assumed to degenerate to Hooke's law, represented by the fourth-order elasticity tensor $\mathbb{C}$ acting on the linearized strains. Lastly, a simple change of variables is performed using the symmetric linearized strain tensor $\bepsilon$ instead of the deformation gradient $\tF$, and we neglect the stabilization term \eqref{eq:stab_hdg} as it is not needed in the small strain regime.

The corresponding weak formulation finally reads: For given data $D=\{\BG,\TG,\UG\}$ find $(\vU,\bsigma,\bepsilon,\bomega,p)\in \RT^k\times\Sigma_h^{k}\times F_h^{k}\times[Q_h^{k-1}]^{d\times d}_{\mathrm{skew}}\times Q_h^{k}$ such that $\sU_n=\UGn$, $\vP_{tn}=\TG_t$, and for all $(\delta\vU,\delta\bsigma,\delta\bepsilon,\delta\bomega,\delta p)\in \RT^k_{\Gamma_{D,n}}\times\Sigma_{h,\Gamma_{N,t}}^{k}\times F_h^{k}\times[Q_h^{k-1}]^{d\times d}_{\mathrm{skew}}\times Q_h^{k}$
\begin{subequations}
	\label{eq:lin_prob}
\begin{alignat}{6}
&\int_{\Omega}\mathbb{C}\bepsilon:\delta\bepsilon\,dV &&+ \int_{\Omega} \bsigma:\delta\bepsilon\,dV &&&&&&-\int_{\Omega} p\,\Tr{\delta\bepsilon}\,dV&&= 0,\label{eq:lin_prob_F}\\
& \int_{\Omega}\bepsilon:\delta\bsigma\,dV &&&&&&-\langle \Grad\vU,\delta\bsigma\rangle_{\TT} &&+\int_{\Omega}\bomega:\delta\bsigma\,dV&&=L^{\ast}_{\mathrm{ext}}(\delta\bsigma),\label{eq:lin_prob_P}\\
& &&-\langle \Grad\delta\vU,\bsigma\rangle_{\TT} &&&&&&&&=L_{\mathrm{ext}}(\delta \vU),\label{eq:lin_prob_u}\\
&\int_{\Omega}\delta\bomega:\bsigma\,dV&&&&&&&&&&=0,\label{eq:lin_prob_w}\\
&\int_{\Omega}-\delta p\,\Tr{\bepsilon}\,dV&&&&&&&&&&=0,\label{eq:lin_prob_p}
\end{alignat}
\end{subequations}
with $L^{\ast}_{\mathrm{ext}}(\cdot)$ and $L_{\mathrm{ext}}(\cdot)$ given by \eqref{eq:ext_forces}. This weak formulation can be seen as a three-field approach with pressure in $(\vU,\bsigma,\bepsilon,p)$ for the MCS formulation with weakly imposed symmetry in $(\vU,\bsigma,p)$ \cite{Gopalakrishnan2020} and the unique solvability of \eqref{eq:lin_prob} can readily be proven with the therein used techniques.

\begin{theorem}[Pressure-robustness]
	\label{thm:pressure_robust}
	Let $\Gamma_{D,n}=\Gamma$ and $\BG=\Grad\Psi$ be a gradient field with the potential $\Psi$ chosen such that it has zero mean value. Let further $\UGn\equiv0$ on $\Gamma_{D,n}$, i.e., clamped or symmetry boundary conditions are prescribed. Consider the pressure space $Q^{k}_h$, and a displacement finite element space $V_h$ such that $\div{(V_h)}=Q_h^{k}$. Then ($\vU$, $\bepsilon$, $p$, $\bsigma$, $\bomega$)=($\vzero$, $\tzero$, $\Pi^k_{L^2}\Psi$, $\Pi^k_{L^2}\Psi\tI$, $\tzero$) is the unique solution of the small strain Problem~\eqref{eq:lin_prob}, where $\Pi^k_{L^2}$ denotes the $L^2$-projection into $\Pol^{k}(\TT)$.
\end{theorem}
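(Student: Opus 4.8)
The plan is to leverage the fact that unique solvability of \eqref{eq:lin_prob} is already available, so that it suffices to verify that the claimed closed-form tuple satisfies all five equations against every admissible test function; uniqueness then identifies it as \emph{the} solution. First I would simplify the loads under the hypotheses: since $\Gamma_{D,n}=\Gamma$ we have $\Gamma_{N,n}=\emptyset$, and the homogeneous clamped/symmetry data make the tangential and normal boundary terms in \eqref{eq:ext_forces} vanish, so that $L^{\ast}_{\mathrm{ext}}(\delta\bsigma)=0$ and $L_{\mathrm{ext}}(\delta\vU)=\int_{\Omega}\Grad\Psi\cdot\delta\vU\,dV$.

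Next I would insert $\vU=\vzero$, $\bepsilon=\tzero$, $\bomega=\tzero$, $p=\Pi^k_{L^2}\Psi$, and $\bsigma=p\tI$ and dispatch the four algebraic equations. Equation \eqref{eq:lin_prob_F} holds because $\bsigma:\delta\bepsilon=p\,\Tr\delta\bepsilon$ cancels the pressure term while the $\mathbb{C}\bepsilon$ term vanishes; \eqref{eq:lin_prob_P} holds since $\bepsilon=\tzero$, $\Grad\vU=\tzero$, $\bomega=\tzero$ kill every term and its right-hand side is zero; \eqref{eq:lin_prob_w} holds because $\delta\bomega:p\tI=p\,\Tr\delta\bomega=0$ for skew $\delta\bomega$; and \eqref{eq:lin_prob_p} is immediate from $\Tr\bepsilon=0$.

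The heart of the argument is the momentum equation \eqref{eq:lin_prob_u}. The key observation is that $\bsigma=p\tI$ is spherical, so its tangential-normal trace $\tQ\bsigma\vN=p\,\tQ\vN=p(\vN-\vN)=\vzero$ vanishes; the facet sum in the duality pairing \eqref{eq:duality_pairing_b} therefore drops out and $\langle\Grad\delta\vU,\bsigma\rangle_{\TT}=\int_{\Omega}p\,\div\delta\vU\,dV$. Since $\delta\vU\in\RT^k_{\Gamma}$ and $\div(V_h)=Q_h^k$, the divergence $\div\delta\vU$ lies in $\Pol^k(\TT)$, so by self-adjointness and idempotency of $\Pi^k_{L^2}$ one may replace $p=\Pi^k_{L^2}\Psi$ by $\Psi$, i.e. $\int_{\Omega}(\Pi^k_{L^2}\Psi)\,\div\delta\vU\,dV=\int_{\Omega}\Psi\,\div\delta\vU\,dV$. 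Integration by parts, discarding the boundary term because $\delta\vU\cdot\vN=0$ on all of $\Gamma$, gives $\int_{\Omega}\Psi\,\div\delta\vU\,dV=-\int_{\Omega}\Grad\Psi\cdot\delta\vU\,dV$, so that $-\langle\Grad\delta\vU,\bsigma\rangle_{\TT}=\int_{\Omega}\Grad\Psi\cdot\delta\vU\,dV=L_{\mathrm{ext}}(\delta\vU)$, as needed.

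I expect \eqref{eq:lin_prob_u} to be the main obstacle, being the only place where the discrete compatibility $\div(V_h)=Q_h^k$, the vanishing of the tangential-normal trace of a spherical stress, and the boundary-term cancellation must all cooperate. The zero-mean assumption on $\Psi$ plays no role in the five identities themselves; it enters only to pin down the additive constant of the pressure, since $p=\Pi^k_{L^2}\Psi$ then also has zero mean (constants lying in $\Pol^k(\TT)$), making it the unique representative selected by the established well-posedness.
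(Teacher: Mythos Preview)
Your proof is correct and follows essentially the same approach as the paper: verify the four algebraic equations by direct substitution, then handle the momentum equation \eqref{eq:lin_prob_u} by reducing the duality pairing of a spherical stress to a divergence integral, invoking the projection property via $\div(V_h)=Q_h^k$, and integrating by parts with the boundary term killed by $\delta\vU\cdot\vN=0$ on $\Gamma$. You are in fact slightly more explicit than the paper, which asserts $-\langle\Grad\delta\vU,\bsigma\rangle_{\TT}=-\int_{\Omega}\div\delta\vU\,\Pi^k_{L^2}\Psi\,dV$ without spelling out that the facet terms vanish because $\bsigma_{tn}=\tzero$ for spherical $\bsigma$, and your remark on the role of the zero-mean hypothesis is a useful clarification absent from the paper's proof.
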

\begin{proof}
Inserting the terms $\vU=\vzero$, $\bepsilon=\tzero$, $p=\Pi^k_{L^2}\Psi$, $\bsigma=\Pi^k_{L^2}\Psi\tI$, and $\bomega=\tzero$ in \eqref{eq:lin_prob_F}, \eqref{eq:lin_prob_P}, \eqref{eq:lin_prob_w}, and \eqref{eq:lin_prob_p} directly fulfills the equations. Next we take a closer look at \eqref{eq:lin_prob_u} and insert $\BG=\Grad\Psi$  and $\bsigma=\Pi^k_{L^2}\Psi\tI$ and use integration by parts
\begin{align}
-\int_{\Omega}\div\delta \vU\,\Pi^k_{L^2}\Psi\,dV = -\langle \Grad\delta \vU, \bsigma\rangle_{\TT} \overset{\eqref{eq:lin_prob_u}}{=} \int_{\Omega}\Grad \Psi \cdot \delta \vU\,dV = -\int_{\Omega}\Psi\, \div\delta \vU\,dV.
\end{align}
We define $\div\delta \vU =: \delta p\in Q_h^{k}$ as the finite element spaces are constructed in such a way that $\div(V_h)= Q_h^{k}$. By recognizing that 
\begin{align}
\int_{\Omega} \Pi^k_{L^2}\Psi\,\delta p\,dV=\int_{\Omega}\Psi\,\delta p\,dV,\qquad \forall \delta p\in Q_h^{k}
\end{align}
is exactly the definition of the $L^2$-projection $\Pi^k_{L^2}$, and noticing that \eqref{eq:lin_prob} is uniquely solvable, we conclude the proof for the small strain case.
\end{proof}

Theorem \ref{thm:pressure_robust} says that in the case of homogeneous Dirichlet data for the normal component of the displacement $\vU$ and if the external force $\BG$ is a gradient field, then the right-hand side gets completely compensated by the pressure $p$ and the displacement $\vU\equiv \vzero$ is exactly zero. Further, the theorem shows that the pressure and velocity space have to fulfill the crucial property $\div(V_h)= Q_h^{k}$, which is fulfilled for $V_h=\RT^k$. 
In finite elasticity, to enforce the constraint $J=\det\tF=1$ exactly, i.e. point-wise, one would require a pressure of high polynomial order due to the quadratic and cubic expression in $\tF$ in 2D and 3D, respectively. This combination is, however, not stable in general and may easily lead to locking as the constraint can only be fulfilled when the displacement is zero. Therefore, only incompressibility in a weak sense can be obtained with mixed finite elements for finite elasticity available so far. This means $\int_{\Omega}C(\det(\tF))\,\delta p\,dV=0$ for all test functions $\delta p$, but not $C(\det(\tF))=0$ point-wise.

\subsection{Quasi-Newton-Raphson method}
\label{subsec:quasi_newton}
In this section we present and discuss a quasi-Newton-Raphson procedure to solve Lagrangian \eqref{eq:disc_lagr_func_hyb_stab}. We start with the first variations of \eqref{eq:disc_lagr_func_hyb_stab}, which reads with the abbreviations $\vUUh=(\vU,\vUh)$, $\delta \vUUh=(\delta \vU,\delta \vUh)$,  
\begin{subequations}
    \label{eq:NDTNS-bvp-hyb-stab}
    \begin{align}
      (\partial_{\tF}\L^{\tau})(\vUUh,\tF,\tP,p)[\delta \tF]&=\int_{\Omega}\left(\frac{\partial\W}{\partial\tF}(\tF)-p\,C'(J(\tF))\cof \tF-  \tP\right):\delta\tF\,dV,\\
       (\partial_{p}\L^{\tau})(\vUUh,\tF,\tP,p)[\delta p]&=\int_{\Omega}-\delta p\, C(J(\tF))\,dV,\\
       (\partial_{\tP}\L^{\tau})(\vUUh,\tF,\tP,p)[\delta \tP]&=\int_{\Omega}(\tI-\tF):\delta\tP\,dV+\langle \Grad\vU,\delta\tP\rangle -\sum_{T\in\TT}\int_{\partial T}\vUh_t\cdot\delta\vP_{tn}\,dA,\\
      \begin{split}
        (\partial_{\vUUh}\L^{\tau})(\vUUh,\tF,\tP,p)[\delta \vUUh]&=\langle \Grad\delta\vU,\tP\rangle-\sum_{T\in\TT}\int_{\partial T}\left(\delta\vUh_t\cdot\vP_{tn}+\tau(\vU-\vUh)_t\cdot(\delta\vU-\delta\vUh)_t\right)\,dA\\
        &\qquad-L_{\mathrm{ext}}(\delta\vU,\delta \vUh).
      \end{split}
    \end{align}
  \end{subequations}
We will use a load incremental algorithm to solve the nonlinear problem. Therefore, we need to linearize the problem at the current iteration and solve the resulting linearized problem. To this end, let $(\vUUh^n,\tF^n,\tP^n,p^n)$ be the result from the $n$-th iteration. The following iterate is then computed with the Newton-Raphson method
\begin{align}
  \label{eq:newton_raphson}
  \begin{split}
    &(\vUUh^{n+1},\tF^{n+1},\tP^{n+1},p^{n+1})=(\vUUh^{n},\tF^{n},\tP^{n},p^{n})+(\Delta\vUUh,\Delta\tF,\Delta\tP,\Delta p),\\
    &\text{with } (\Delta\vUUh,\Delta\tF,\Delta\tP,\Delta p) = -(\mathbb{K}^n)^{-1}\,\vR^n,
  \end{split}
\end{align}
where $\vR^n$ denotes the residual vector evaluated at the $n$-th iterate, $\partial\L^{\tau,n}:=(\partial\L^{\tau})( \vUUh^n,\tF^n,\tP^n,p^n)$, 
\begin{align}
  \vR^n:=\vR^n(\delta \vUUh,\delta \tF,\delta \tP, \delta p):=\begin{pmatrix}
    \partial_{\tF}\L^{\tau,n}[\delta \tF],&
    \partial_{p}\L^{\tau,n}[\delta p],&
    \partial_{\tP}\L^{\tau,n}[\delta \tP],&
    \partial_{\vUUh}\L^{\tau,n}[\delta \vUUh]
  \end{pmatrix}^{\T}
\end{align}
and $(\mathbb{K}^n)^{-1}$ is the inverse of the stiffness matrix $\mathbb{K}^n$, i.e., the linearization of \eqref{eq:NDTNS-bvp-hyb-stab} or the second variation of Lagrangian \eqref{eq:disc_lagr_func_hyb_stab} at the $n$-th iterate. The system \eqref{eq:newton_raphson} used to solve for the increment reads: Find $(\Delta \vUUh,\Delta \tF,\Delta \tP,\Delta p)\in (\RT^k\times\Lambda_h^{k})\times F_h^k\times\Sigma_h^k\times Q_h^{k}$ such that for all $(\delta\vUUh,\delta\tF,\delta\tP,\delta p)\in (\RT^k\times\Lambda_h^{k})\times F_h^k\times\Sigma_h^k\times Q_h^{k}$
\begin{subequations}
  \label{eq:linearized_prob}
  \begin{align}
    \int_{\Omega}\left((\mathbb{A}(\tF^n,p^n)\Delta \tF):\delta\tF+b(\tF^n;\Delta p,\delta\tF)-  \Delta\tP:\delta\tF\right)\,dV&= -R_1^n(\delta \tF),\label{eq:linearized_prob_F}\\
         \int_{\Omega}b(\tF^n;\delta p,\Delta\tF)\,dV&=-R_2^n(\delta p),\label{eq:linearized_prob_p}\\
         \int_{\Omega}-\Delta \tF:\delta\tP\,dV+\langle \Grad\Delta\vU,\delta\tP\rangle -\sum_{T\in\TT}\int_{\partial T}\Delta\vUh_t\cdot\delta\vP_{tn}\,dA&=-R_3^n(\delta \tP),\\
        \langle \Grad\delta\vU,\Delta\tP\rangle-\sum_{T\in\TT}\int_{\partial T}\left(\delta\vUh_t\cdot\Delta\vP_{tn}+\tau(\Delta\vU-\Delta\vUh)_t\cdot(\delta\vU-\delta\vUh)_t\right)\,dA&=-R_4^n(\delta \vUUh),\label{eq:linearized_prob_U}
  \end{align}
\end{subequations}
where $b(\tF^n;p,\tF):=-(p\, C^\prime(J^n)\,\cof\tF^n:\tF)$ and the fourth-order tensor $\mathbb{A}(\tF^n,p^n)$ is defined by
\begin{align}
\label{eq:def_stiffness_mat_law}
  \begin{split}
    (\mathbb{A}(\tF^n,p^n)\tA):\tB&:= \left(\left(\frac{\partial^2 \W}{\partial \tF^2}(\tF^n)-p^n\frac{\partial(C^\prime(J)\cof\tF)}{\partial \tF}(\tF^n)\right)\tA\right):\tB \\
    &= \mu\,\tA: \tB - p^n\left(C^{\prime\prime}(J^n)(\cof\tF^n:\tA)\cof\tF^n+\tF^n\times \tA\right):\tB.
  \end{split}
\end{align} 
Here, $(\tA\times \tB)_{ij} = \varepsilon_{ikl}\varepsilon_{jmn}A_{km}B_{ln}$ denotes the tensor cross product of $\tA$ and $\tB$ \cite{BGO16} with $\varepsilon_{ijk}$ the Levi--Civita permuting tensor. With it, there holds $\cof \tB = \frac{1}{2} \tB\times \tB$ for any tensor $\tB$ and the tensor cross product is linear in its arguments and symmetric. Thus, there holds for the variation of the cofactor matrix $\cof\tA$ in direction $\tB$ that $\partial_{\tA}\cof(\tA)[\tB]=\tA\times \tB$.

We will stabilize the pressure equation by adding a small negative mass matrix
\begin{align}
  \label{eq:pressure_reg}
  \int_{\Omega}-\varepsilon_p\, \Delta p\,\delta p\,d V,\qquad 1\gg \varepsilon_p>0. 
\end{align}
The negative sign might seem counterintuitive, but it is necessary to make the system positive definite. Adding the pressure stabilization \eqref{eq:pressure_reg} at the left-hand side of \eqref{eq:linearized_prob_p} we obtain the system
\begin{subequations}
  \label{eq:linearized_prob_stab}
  \begin{align}
    \int_{\Omega}\left((\mathbb{A}(\tF^n,p^n)\Delta \tF):\delta\tF+b(\tF^n;\Delta p,\delta\tF)-  \Delta\tP:\delta\tF\right)\,dV&= -R_1^n(\delta \tF),\label{eq:linearized_prob_stab_F}\\
         \int_{\Omega}\left(b(\tF^n;\delta p,\Delta\tF)-\varepsilon_p\, \Delta p\,\delta p\right)\,dV&=-R_2^n(\delta p),\label{eq:linearized_prob_stab_p}\\
         \int_{\Omega}-\Delta \tF:\delta\tP\,dV+\langle \Grad\Delta\vU,\delta\tP\rangle -\sum_{T\in\TT}\int_{\partial T}\Delta\vUh_t\cdot\delta\vP_{tn}\,dA&=-R_3^n(\delta \tP),\\
        \langle \Grad\delta\vU,\Delta\tP\rangle-\sum_{T\in\TT}\int_{\partial T}\left(\delta\vUh_t\cdot\Delta\vP_{tn}+\tau(\Delta\vU-\Delta\vUh)_t\cdot(\delta\vU-\delta\vUh)_t\right)\,dA&=-R_4^n(\delta \vUUh).\label{eq:linearized_prob_stab_U}
  \end{align}
\end{subequations}
We denote the resulting stiffness matrix again by $\mathbb{K}^n$. We emphasize that pressure stabilization \eqref{eq:pressure_reg} is not used for the residuum $\vR^n$. Applying static condensation to the linearized system \eqref{eq:linearized_prob_stab}, by using a Schur complement, we can eliminate all internal DoFs, especially the deformation gradient $\tF$, the 1\textsuperscript{st}~Piola--Kirchhoff stress tensor $\tP$ and the pressure $p$. The resulting linear system involving only the displacement $\vU$ and the hybridization field $\vUh$ is then a minimization problem, allowing for the use of efficient direct and iterative solvers. 

\subsection{Eigenvalue stabilization and solvability}
In practice, we solve problem
\eqref{eq:linearized_prob_stab} with the elasticity tensor
$\mathbb{A}$. However, due to the high nonlinearity of the problem, the fourth-order tensor $\mathbb{A}$ can have negative eigenvalues, although the physical problem prevents such eigenvalues. This may cause stability issues such that Newton-Raphson may converge slowly or diverge. If negative eigenvalues are expected to appear by physics, e.g., for bifurcation or buckling problems, techniques such as arc-length methods should be used. Henceforth, we assume that no physical negative eigenvalues appear. To improve solvability, we can adapt $\mathbb{A}$ as presented in the following. As we will use the changes only for the stiffness matrix and not to modify the residuum, we arrive at a quasi-Newton-Raphson method for \eqref{eq:newton_raphson}. First, we compute the smallest eigenvalue of $\mathbb{A}$
\begin{align}
  \label{eq:lambda_min}
  \lambda_{\min}(\mathbb{A}(\tF^n,p^n)):=\max\left\{\varepsilon_\lambda,-\min \lambda_{\mathbb{A}(\tF^n,p^n)}\right\},\qquad \varepsilon_\lambda\ge 0
\end{align} 
and use it to shift the tensor $\mathbb{A}$ to make it positive (semi-)definite, depending on if the constant $\varepsilon_\lambda$ in \eqref{eq:lambda_min} is chosen to be zero or positive \cite{TECL2008}. The shifted tensor reads
\begin{align}
  \label{eq:shifted_mat_tensor}
  \tilde{\mathbb{A}}(\tF^n,p^n):= \mathbb{A}(\tF^n,p^n)+\lambda_{\min}(\mathbb{A}(\tF^n,p^n))\I,
\end{align}
with $\I$ denoting the fourth-order identity tensor. 

Thanks to stabilizations \eqref{eq:shifted_mat_tensor} and \eqref{eq:pressure_reg}, the linearized subproblems can be shown to be solvable.
\begin{theorem}
  Let $(\vUUh^n,\tF^n,\tP^n,p^n)$ be the $n$-th iterate of the quasi-Newton-Raphson method. 
  Assuming   $\varepsilon_{\lambda}>0$ in \eqref{eq:lambda_min}, and
    $J^n=\det \tF^n >0$.  
    Then, the stiffness matrix $\tilde{\mathbb{K}}^n$, where $\tilde{\mathbb{A}}$ instead of $\mathbb{A}$ is used, is invertible.
\end{theorem}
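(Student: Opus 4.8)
The plan is to use that $\tilde{\mathbb{K}}^n$ is a finite symmetric square matrix, so that invertibility is equivalent to having trivial kernel, and to verify this by static condensation: I would eliminate the element-local fields $p$, $\tF$, and $\tP$ in turn and show that the operator remaining on the displacement pair $\vUUh=(\vU,\vUh)$ is positive definite. At each stage only the block being eliminated must be definite, and the hypotheses enter precisely here. The assumption $J^n=\det\tF^n>0$ guarantees that $\tF^n$ is invertible, so that $\cof\tF^n$, the cross product $\tF^n\times(\cdot)$, and hence all coefficients of $b(\tF^n;\cdot,\cdot)$, of $\mathbb{A}(\tF^n,p^n)$, and of the shifted tensor $\tilde{\mathbb{A}}$ are finite and well defined. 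The assumption $\varepsilon_\lambda>0$ makes $\lambda_{\min}(\mathbb{A}(\tF^n,p^n))>0$, so that $\tilde{\mathbb{A}}=\mathbb{A}+\lambda_{\min}\I$ is symmetric positive definite; and $\varepsilon_p>0$ renders the pressure block a strictly negative definite mass matrix.

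First I would condense $p$: since the pressure block $-\varepsilon_p\mathsf M$ (with $\mathsf M$ the SPD pressure mass matrix) from \eqref{eq:pressure_reg} is negative definite, writing $\mathsf B$ for the matrix of the coupling $b(\tF^n;\cdot,\cdot)$ between $\Delta p$ and $\delta\tF$, elimination augments the $\tF$-form $\int_\Omega(\tilde{\mathbb{A}}\Delta\tF){:}\delta\tF\,dV$ of \eqref{eq:linearized_prob_stab_F} by the positive semidefinite Schur term $\tfrac1{\varepsilon_p}\mathsf B^{\T}\mathsf M^{-1}\mathsf B$, leaving a symmetric positive definite form with matrix $\hat{\mathsf A}$ on $F_h^k$. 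Because $\hat{\mathsf A}$ is SPD, $\tF$ can then be condensed, and the resulting Schur complement on $\tP$ is $-\mathsf C^{\T}\hat{\mathsf A}^{-1}\mathsf C=:-\mathsf R$, where $\mathsf C$ is the matrix of the $L^2$ pairing $\int_\Omega\tF{:}\tP\,dV$. This is negative definite precisely because that pairing is nondegenerate: as $F_h^k=(\Sigma_h^{k,\mathrm{dc}})^{\T}\supseteq\Sigma_h^k$, for any $\Delta\tP\ne0$ the admissible test $\delta\tF=\Delta\tP\in F_h^k$ gives $\int_\Omega\Delta\tP{:}\Delta\tP>0$, so $\mathsf C$ is injective and $\mathsf R$ is SPD. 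Hence $\tP$ can be condensed as well, and since its diagonal block is $-\mathsf R$, it contributes $+\mathsf D\mathsf R^{-1}\mathsf D^{\T}$ to the displacement block, where $\mathsf D$ is the matrix of the coupling $\langle\Grad\vU,\tP\rangle-\sum_{T}\int_{\partial T}\vUh_t{\cdot}\vP_{tn}\,dA$.

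The operator remaining on $\vUUh$ is the HDG stabilization $\mathsf S$, i.e.\ $\sum_{T}\int_{\partial T}\tau(\vU-\vUh)_t{\cdot}(\delta\vU-\delta\vUh)_t\,dA\ge0$ from \eqref{eq:stab_hdg}, plus the positive semidefinite term $\mathsf D\mathsf R^{-1}\mathsf D^{\T}$; both are positive semidefinite, so the reduced problem is a minimization, and it only remains to show the sum is positive definite, i.e.\ that $\vUUh\in\ker\mathsf S\cap\ker\mathsf D^{\T}$ forces $\vUUh=0$. This is the \emph{main obstacle}. Vanishing of the stabilization forces $(\vU-\vUh)_t=0$ on every $\partial T$, so the tangential trace of $\vU$ equals the single-valued facet field $\vUh_t$; together with the normal continuity built into $\RT^k$ this makes $\vU$ fully continuous. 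Vanishing of $\mathsf D^{\T}\vUUh$ against the fully discontinuous $\Sigma_h^{k,\mathrm{dc}}$ then localizes element by element and, by the local exactness of the $\RT$--$\tH(\curl\,\div)$ pairing established for the MCS method \cite{Gopalakrishnan2019,Lederer2019}, forces $\Grad\vU=0$ on each $T$; continuity upgrades this to $\vU\equiv\text{const}$, and the homogeneous essential data $\sU_n=0$ on $\Gamma_{D,n}$ and $\vUh_t=0$ on $\Gamma_{D,t}$, which exclude rigid translations, give $\vU=0$ and then $\vUh=0$. Since every eliminated block and the final displacement operator are definite, $\tilde{\mathbb{K}}^n$ is nonsingular, hence invertible. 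The genuinely nontrivial input is thus the kernel/inf-sup argument of the last step, inherited from the MCS stability theory; the remainder is block-elimination bookkeeping made possible by the two stabilizations.
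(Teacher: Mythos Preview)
Your argument is correct, but the paper's route is shorter and more direct. Rather than sequentially condensing $p$, $\tF$, and $\tP$ via Schur complements, the paper tests the homogeneous system \eqref{eq:linearized_prob_stab} against itself (taking each $\delta(\cdot)=\Delta(\cdot)$) and combines the four equations into the single energy identity
\[
\int_{\Omega}\Big((\tilde{\mathbb{A}}\Delta\tF):\Delta\tF+\varepsilon_p(\Delta p)^2\Big)\,dV+\sum_{T\in\TT}\int_{\partial T}\tau\,(\Delta\vU-\Delta\vUh)_t^2\,dA=0,
\]
from which $\Delta\tF=\tzero$ and $\Delta p=0$ follow at once. Back-substitution into \eqref{eq:linearized_prob_stab_F} then gives $\Delta\tP=\tzero$ because $F_h^k$ and $\Sigma_h^{k,\mathrm{dc}}$ span the same polynomial space (the very fact you use to show $\mathsf C$ is injective), and the remaining equation forces $\Delta\vUUh=\vzero$ by the same MCS compatibility you invoke at the end. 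Both proofs thus rest on identical structural ingredients; the paper's back-substitution occupies four lines, while your Schur-complement route is longer but has the practical merit of certifying that every stage of the static condensation actually used in the solver is well posed. One small remark: your kernel argument uses $(\vU-\vUh)_t=0$ from $\mathsf S$, which requires $\tau>0$; the paper's version does not, since $\Delta\vUUh=\vzero$ already follows from the MCS inf-sup on the $\tP$-coupling alone.
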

\begin{proof}
  We prove that for zero right-hand side $(\Delta \vUUh,\Delta \tF,\Delta \tP,\Delta p)=(\vzero,\tzero,\tzero,0)$ is the unique solution of \eqref{eq:linearized_prob_stab} (replacing $\mathbb{A}$ by $\tilde{\mathbb{A}}$). By setting $(\delta\vUUh,\delta\tF,\delta\tP,\delta p)=(\Delta \vUUh,\Delta \tF,\Delta \tP,\Delta p)$ and using the three last equations for the first one we obtain
  \begin{align*}
    \int_{\Omega}\left((\tilde{\mathbb{A}}(\tF^n,p^n)\Delta \tF):\Delta \tF+\varepsilon_p\,(\Delta p)^2\right)\,dV+\sum_{T\in\TT}\int_{\partial T}\tau(\Delta\vU-\Delta\vUh)_t^2\,dA=0.
  \end{align*} 
  By the positive definiteness of $\tilde{\mathbb{A}}(\tF^n,p^n)$ and the non-negativity of the other terms, the only possibility to fulfill the equation is $\Delta \tF=\tzero$ and $\Delta p = 0$. Now, using that the finite element spaces for $\tP$ and $\tF$ span the same polynomial space  
    $\Pol^k(\Omega,\R^{d\times d})$   
    we directly obtain from the first equation \eqref{eq:linearized_prob_stab_F} that $\Delta \tP=\tzero$. Further, by the compatibility of the finite element spaces for $(\vU,\vUh)$ and $\tP$ from the MCS method, we obtain from the last equation \eqref{eq:linearized_prob_stab_U} that $\Delta\vU=\Delta\vUh=\vzero$, which finishes the proof.
\end{proof}

\subsection{Final algorithm}
In addition to the stabilization strategies described above, one can apply a damping algorithm to the quasi-Newton-Raphson method to reduce possible overshooting at the beginning. For completeness, we define a damping parameter $\beta\in (0,1]$, which gets multiplied with the iteration number until $1$ is reached, such that quadratic Newton convergence is obtained after the first steps
  \begin{align}
    \label{eq:damped_newton}
    \begin{split}
      &(\vUUh^{n+1},\tF^{n+1},\tP^{n+1},p^{n+1})=(\vUUh^{n},\tF^{n},\tP^{n},p^{n})+\min\{\beta\times n,1\}\,(\Delta\vUUh,\Delta\tF,\Delta\tP,\Delta p),\\
      &\text{with } (\Delta\vUUh,\Delta\tF,\Delta\tP,\Delta p) = -(\mathbb{K}^n)^{-1}\,\vR^n.
    \end{split}
  \end{align}
Further, the load stepping gets reduced by half if the Newton-Raphson method does not converge in a prescribed number of iterations or diverges. We check for each load step if the volume determinant $J_0^{n+1}:=\Pi^0_{L^2}(J^{n+1})$ interpolated into element-wise constants is positive. If not, the result is nonphysical, and we reduce the load stepping. A pseudo-code of the adaptive load stepping quasi-Newton-Raphson algorithm can be found in Algorithm~\ref{alg:newton}.

\begin{algorithm}[H] 
	\label{alg:newton_alorithm}
	\begin{algorithmic}[1]
		\State{\bf Input:} Stabilization $\varepsilon_p$. Newton-damping $\beta$. Initial, minimal load-increment $\Delta \xi_{\mathrm{init}}$, $\mathrm{tol}_{\mathrm{inc}}$. Maximal Newton iterations $n_{\mathrm{max}}$.
		\State {\bf Output:} Solution at final configuration
    \State {\bf Initialize:} $\xi=0$, $\Delta\xi = \Delta\xi_{\mathrm{init}}$, $n_{\mathrm{old}}=0$, $\mathrm{sol}^n=0$
		\While{ $\xi < 1$}
    \State $\xi := \min\{\xi+\Delta\xi,1\}$
    \State $(\mathrm{sol}_h,n_{\mathrm{it}}):= \text{Quasi-Newton-Raphson}(\mathrm{sol}^n,\xi,\beta,n_{\mathrm{max}})$ with \eqref{eq:damped_newton}
    \State Compute minimal volume determinant $J_0^n$
		\If{$n_{\mathrm{it}}<n_{\mathrm{max}}$ \textbf{and}  $J_0^n>0$}  
    \State \textit{Quasi-Newton-Raphson converged}
		\State $\mathrm{sol}^n:= \mathrm{sol}_h$
    \If{$n_{\mathrm{it}}<8$ \textbf{and} $n_{\mathrm{old}}< 8$}
    \State $\Delta\xi:=\max\{1.5\,\Delta\xi,\Delta\xi_{\mathrm{init}}\}$
    \EndIf
    \If{$n_{\mathrm{it}}>20$ \textbf{and} $n_{\mathrm{old}}> 20$}
    \State $\Delta\xi:=0.8\,\Delta\xi$
    \EndIf
		\State $n_{\mathrm{old}}:=n_{\mathrm{it}}$
		\Else
    \State \textit{Quasi-Newton-Raphson did not converged}
    \State $\xi := \xi-\Delta\xi$
		\State $\Delta \xi := 0.5\,\Delta\xi$
    \If{$\Delta\xi<\mathrm{tol}_{\mathrm{inc}}$}
    \State \textit{Problem cannot be solved}
    \State \textbf{break}
    \EndIf
		\EndIf
		\EndWhile\\
    \Return $\mathrm{sol}^n$
		\caption{Adaptive load stepping quasi-Newton-Raphson algorithm}
		\label{alg:newton}
	\end{algorithmic}
\end{algorithm}

\section{Numerical results}
\label{sec:Numerical results}
In all numerical benchmarks the neo-Hookean material law \eqref{eq:neo-Hookean-like-strain-energy} $\W(\tF):= \dfrac{\mu}{2}\left(\tF:\tF-d\right)$ is used together with the constraint equation given by \eqref{eq:constraint-like-volumetric-response-function}. We observed that $C(J)= J-1$, although not polyconvex, performed more stable than $C(J)=\ln J$.

We denote our proposed mixed finite element method in the following by \emph{NDTNS}.

\subsection{Methods}
\label{subsec:Methods}
For a comprehensive comparison with other methods, we consider the following three approaches.

\subsubsection{Standard method (conformal Stokes like)}
\label{subsubsec:Standard method}
A simple and straightforward approach for incompressible elasticity is to consider standard Lagrangian elements
\begin{align}
U_h^k:=\{ \vU_h\in \Pol^k(\TT)\,:\, \vU_h \text{ is globally continuous}  \},\label{eq:fespace_h1}
\end{align}
for the (high-order) construction  we refer to \cite{Zaglmayr06,ZT20}, and use its vector-valued version for the displacement field $\vU\in [U_h^k]^d$. For $k>1$, the pressure space can be discretized in a Taylor--Hood fashion \cite{HT73} with $p\in U_h^{k-1}$ of one polynomial degree less than the displacement. In the lowest-order case $k=1$, the displacement field is enriched with an internal cubic or quartic bubble function $B^{d+1}(T)$ in 2D or 3D, respectively, and the pressure is in $U_h^1$, i.e., a MINI element like discretization \cite{KHPA20}.

All other quantities are expressed in terms of the displacement field unknown, e.g., $\tF(\vU)= \tI+\Grad\vU$ leading to: Find $(\vU,p)\in [U^k_h]^d\times U^{k-1}_h$ (respectively $[(U^1_h\oplus \prod_{T\in\TT}B^{d+1}(T))]^d\times U^1_h$ for $k=1$) for the Lagrangian
\begin{equation}
\L^\mathrm{std}(\vU,p)=\int_{\Omega}\left(\W(\tF(\vU))+p\,C(J(\tF(\vU))) - \BG\cdot\vU\right)\,dV - \int_{\Gamma_N}\TG\cdot \vU\,dA.
\label{eq:standard-method-incompr}
\end{equation}

\subsubsection{Compatible-Strain mixed Finite Element Method (CSMFEM)}
\label{subsubsec:CSMFEM}
The \emph{CSMFEM} for 2D and 3D incompressible materials introduced in  \cite{Shojaei2018} and \cite{Shojaei2019}, respectively, is used to compare the \emph{NDTNS} method with a mixed formulation based on scalar and vector-valued finite elements for nonlinear elasticity. Therein, the displacement $\vU$, displacement gradient $\tK$, 1\textsuperscript{st}~Piola--Kirchhoff stress tensor $\tP$, and the pressure $p$ are taken as independent fields, which live in the function spaces $\VHone$, $\vH(\curl;\Omega, \R^d)^d$, $\vH(\div;\Omega, \R^d)^d$, and $\Ltwo$, respectively. Note, that $\vH(\curl;\Omega, \R^d)^d$ and $\vH(\div;\Omega, \R^d)^d$ are matrix valued where each row is in $\vH(\curl;\Omega, \R^d)$ and $\vH(\div;\Omega, \R^d)$, respectively. The Lagrangian in the incompressible case reads
\begin{align}
\L^{\mathrm{CSMFEM}}_{2D}(\vU,\tK,\tP,p)&=\int_{\Omega}\left(\W(\tK+\tI) + (\tK-\Grad\vU):\tP - p\, C(J(\tK))- \BG\cdot\vU\right)\,dV - \int_{\Gamma_N}\TG\cdot\vU\,dA,\\
\L^{\mathrm{CSMFEM}}_{3D}(\vU,\tK,\tP,p)&=\L^{\mathrm{CSMFEM}}_{2D}(\vU,\tK,\tP,p) + \int_{\Omega}\dfrac{\alpha}{2}(\tK-\Grad\vU):(\tK-\Grad\vU)\,dV,
\end{align}
where $\alpha \geq 0$ denotes a stability parameter supporting the equality $\tK=\Grad\vU$.  As suggested in \cite{Shojaei2019}, we set $\alpha =1 \times 10^6$ for all numerical 3D examples. 

In the two-dimensional case \cite{Shojaei2018} the authors propose the methods ``H1c1dn1L0'' for linear and ``H2c2dn2L1'' for quadratic displacement fields. More precisely, $(\vU,\tK,\tP,p)\in [U^k_h]^2\times [\mathcal{N}_{II}^k]^2\times [RT^{k-1}]^2\times Q_h^{k-1}$, $k=1,2$. This leads to 25 (55) total DoFs and 24 (42) coupling DoFs per triangle for the linear (quadratic) method. Further, using \eqref{eq:asymp_relation}, the asymptotic number of DoFs with respect to the number of vertices is 22\#V (64\#V) total and 20\#V (38\#V) coupling DoFs for linear (quadratic) elements. Compared with Table~\ref{tab:DoFs_combinations}, the \emph{CSMFEM} method has less total but more coupling DoFs than \emph{NDTNS}.

For the finite element discretization of $\vU$, $\tK$, $\tP$, and $p$ the following spaces are used for \emph{CSMFEM} in three dimensions: $\vU\in [U^2_h]^3$, $\tK\in [\mathcal{N}_h]^3$, where $\mathcal{N}_h=\mathcal{N}_{II}^1\oplus \Pi_{T\in\mathcal{T}}\mathcal{N}_{I}^{2,\mathrm{bubble}}(T)$, $\tP\in [RT^0]^3$, and $p\in Q_h^0$, leading to $88$ total and $78$ coupling DoFs per tetrahedron. $RT^k$ and $\mathcal{N}_{II}^k$ 
denote the degree-$k$ Raviart--Thomas \cite{RT77} and N\'ed\'elec \cite{Nedelec1986} finite element space, respectively. For $\mathcal{N}_h$ the N\'ed\'elec space of second kind gets enriched with the three cubic tangential bubble functions $\mathcal{N}_{I}^{2,\mathrm{bubble}}(T)$ appearing for the N\'ed\'elec space of first kind, which lies in between the full quadratic and cubic polynomial space, \cite{Zaglmayr06}. The number of asymptotic DoFs is 162\#V and 102\#V total and coupling, respectively. The 3D \emph{CSMFEM} method has about half of the coupling DoFs compared to the \emph{NDTNS} method, cf. Table~\ref{tab:DoFs_combinations}. Nevertheless, as we will show in Section~\ref{sec:Numerical results}, our method leads to improved convergence rates and more stable results.

\subsubsection{Simo--Taylor--Pister (STP) like method}
In \cite{SimoTaylorPister85} a three-field formulation for nearly incompressible materials has been proposed. A new unknown $\Theta$ is introduced as the dilatation field, which is coupled to the determinant of the deformation gradient by a Lagrange multiplier $\xi$. Then, the adapted isochoric deformation gradient according to the Flory split \cite{Flory61}
\begin{align*}
  \mathring{\tF}= \left(\frac{\Theta}{\det \tF}\right)^{1/d}\tF
\end{align*}
is used for the material law $\W(\cdot)$. We enforce the incompressibility constraint on $\Theta$ (instead of $\det \tF$) by the pressure $p$. Thus, the Lagrangian reads
\begin{align*}
  \L^{\mathrm{STP}}(\vU,\Theta,\xi,p)=\int_{\Omega}\left(\W(\mathring{\tF}) + \xi\left(\Theta-\det \tF\right) - p\,C(\Theta) - \BG\cdot\vU\right)\,dV - \int_{\Gamma_N}\TG\cdot\vU\,dA.
\end{align*}
For the finite element discretization, we follow \cite{schonherrRobustHybridMixed2022} and 
consider 
quadratic Lagrange finite elements for the displacement $\vU$ and element-wise constant discontinuous elements 
or for $\Theta$, $\xi$, and $p$. This method is denoted as \emph{STP}. 
As only the displacement field has coupling DoFs, the number is
12/30 per triangle/tetrahedron (and 15/33 total DoFs).

\subsection{Examples}
\label{subsec:Examples}
In the following, we provide a detailed numerical comparison of the proposed \emph{NDTNS} method with the above-mentioned methods. 
We focus on the case with quadratic displacement approximations throughout ($k=2$). All the numerical examples were performed in the open source finite element library NETGEN/NGSolve\footnote{\href{www.ngsolve.org}{www.ngsolve.org}} \cite{Sch97,Sch14}. For reproducibility, the code for the benchmarks and computational results are publicly available on \cite{FNSZ2025}.

We set $\beta=1$, i.e., no damping is required for Newton-Raphson. The pressure stabilization in \eqref{eq:pressure_reg} is set to $\varepsilon_p=\mu\times 10^{-7}$. We also use this stabilization for \emph{std} for the pressure, for \emph{STP} for the pressure, dilatation, and Lagrange multiplier, and for \emph{CSMFEM} for the pressure and 1\textsuperscript{st} Piola--Kirchhoff stress tensor. We stop Newton-Raphson if a residuum of $10^{-5}$ is reached. Further, we set $\Delta \xi_{\mathrm{init}}=0.1$, $\mathrm{tol}_{\mathrm{inc}}=10^{-5}$, and the maximal iterations to $n_{\max}=40$.

\subsubsection{Inflation of a cylindrical shell (2D)  and a hollow spherical ball
(3D)}
\label{subsubsec:Inflation shell}
We consider the inflation (expansion) of a cylindrical shell and a hollow spherical ball in two and three dimensions, respectively; see Figure~\ref{fig:Inflation_ball_geo}. The inner radius, outer radius, and Lam\'e parameter are in both cases given by \cite{Shojaei2018,Shojaei2019}
\begin{align*}
  R_{\mathrm{in}}=0.5\mathrm{mm},\qquad R_{\mathrm{out}}=1\mathrm{mm},\qquad \mu= 1 \mathrm{N/mm}^2.
\end{align*}
\begin{figure}[ht!]
	\centering
  \begin{tikzpicture}[scale=0.8]
    \draw[fill=lightgray] (0,0) circle (2);
    \draw[fill=white] (0,0) circle (1);
    \draw[->] (0,0) -- (1/1.41,1/1.41) node[midway, below right]{$R_{\mathrm{in}}$};
    \draw[->] (0,0) -- (1.41,-1.41) node[ left]{$R_{\text{out}}$};

    \foreach \x in {0, 30,60,90,120,150,180,210,240,270,300,330} {
        \draw[red, thick, ->] ({2*cos(\x)}, {2*sin(\x)}) -- ({2.4*cos(\x)}, {2.4*sin(\x)});
    }

    \draw[dashed] (-2,0)--(2,0);
    \draw[dashed] (0,-2)--(0,2);

    \node at (2., 2.) {$\vU_{\mathrm{out}}$};
  \end{tikzpicture}\hspace*{3cm}
	\includegraphics[width=0.3\linewidth]{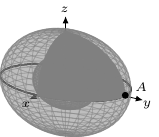}
    \caption{2D and 3D geometries for cylindrical shell and hollow spherical ball example, respectively.}
	\label{fig:Inflation_ball_geo}
\end{figure}
The inner boundary is traction-free and at the outer boundary the
displacement $\UG_{\mathrm{out}}=(\gamma-1)\vX$ is prescribed with $\vX$ the position vector and $\gamma > 1$.
Due to symmetry and incompressibility, the problems benefit from having exact
analytic solutions.
The exact displacement is
\begin{align*}
    \vU_{\mathrm{ex}}(\vX) = \left(\frac{r(R)}{R}-1\right)\vX,
\end{align*}
    where $ R=\|\vX\|$, $
    r(R) = \left(R^d+(\gamma^d-1)R^d_{\text{out}}\right)^{\frac{1}{d}}$,
    and $d\in \{2,3\}$ is the spatial dimension.
The exact pressure is 
\begin{align*}
    p_{\mathrm{ex}}(\vX)=\left\{
        \begin{tabular}{ll}
    $\mu\frac{R^2}{r(R)^2}-\frac{\mu(\gamma^2-1)R^2_{\mathrm{out}}}{2}\left(\frac{1}{r(R_{\mathrm{in}})^2}
    -\frac{1}{r(R)^2}\right)+\mu\ln\left(\frac{r(R_{\mathrm{in}})R}{R_{\mathrm{in}}r(R)}\right),
            $ & if $d=2$,\\[.2ex]
    $\mu\frac{R^4_{\text{in}}}{r^4(R_{\text{in}})}-\frac{\mu}{2}\left(g(R)-g(R_{\text{in}})\right),$
           & if $d=3$,
        \end{tabular}
        \right.
\end{align*}
where $g(R)=R(3r^3(R)+(\gamma^3-1)R^3_{\text{out}})/r^4(R)$. 
The exact deformation gradient is then $\tF_{\mathrm{ex}}=\Grad \vU_{\mathrm{ex}}+\tI$ and the 1\textsuperscript{st}~Piola--Kirchhoff stress tensor is computed by $\tP_{\mathrm{ex}}=\tilde{\tP}_{\mathrm{ex}}+p_{\mathrm{ex}}\,Q(\tF_{\mathrm{ex}})$, where $Q(\tF)= C^\prime(\tF)\tF^{-T}$. Due to symmetry, only one quarter, respectively, one eight of the domain is considered in two and three dimensions and symmetry boundary conditions are prescribed on the arising boundaries.

We conduct a convergence study of the methods presented in Subsection~ \ref{subsec:Methods} under successive mesh refinements, where the mesh elements are quadratically curved at the boundaries. 

For \emph{NDTNS}, we locally compute a postprocessed displacement \cite{FJQ2019} $\vU_h^*\in [Q_h^{k+1}]^d$ as follows: 
\begin{alignat*}{2}
    \int_T\Grad \vU_h^*:\Grad \mathbf{v}_h dV = &\;
    \int_T(\tF_h-\tI):\Grad \mathbf{v}_h dV, \quad  \forall \mathbf{v}_h\in
    [\Pol^{k+1}(T)]^d,&&\quad \forall T\in\TT,\\
    \int_T\vU_h^*dV = &\;
    \int_T \vU_h dV&&\quad \forall T\in\TT.
\end{alignat*}
The $L^2$-norms of the errors in 
$\vU, p,  
\vek{F},$ and $\vek{P}$ are reported in Table~\ref{tab:1} for the two-dimensional case and in Table~\ref{tab:2} for the three-dimensional case.
We take $\gamma =2$ in 2D and $\gamma = 1.5$ in 3D. 

In the 2D results (Table~\ref{tab:1}), we observe optimal third-order convergence for all variables $\vU_h, p_h, \vek{F}_h$, and $\vek{P}_h$, and 
a superconvergent fourth-order of convergence for the postprocessed displacement $\vU_h^*$ using \emph{NDTNS} without a stabilization parameter, $\tau=0$. Setting $\tau=100$ the convergence is at the beginning slightly reduced for $p_h$, $\tF_h$, $\tP_h$, and thus for the postprocessed displacement $\vU^\ast_h$. With finer meshes the super-convergence is regained. Using a strong and mesh-size dependent stabilization $\tau=100/h$ the super-convergence for $p_h$, $\tF_h$, $\tP_h$, and $\vU^\ast_h$ is lost. However, this strong type of stabilization is only required in regions where the deformation is massive. In moderate deformations, setting $\tau$ constant or even zero yields stable and accurate solutions. The investigation of optimal choices for the stabilization term and parameter will be the topic of future research.

For \emph{std}, we achieve third-order convergence in displacement and second-order convergence in the remaining variables.
The \emph{CSMFEM} method yields similar convergence rates to \emph{std}, with slightly larger error magnitudes.
The \emph{STP} method, on the other hand, produces second-order convergence in displacement and first-order convergence in the other variables. The reduced convergence is expected due to the piece-wise constant discretization of the pressure (and dilatation).

In the 3D results (Table~\ref{tab:2}), convergence rates for \emph{NDTNS} are similar to 2D, showing (nearly) optimal third-order of convergence for all variables $\vU_h, p_h, \vek{F}_h$, and $\vek{P}_h$, with 
(nearly)  fourth-order superconvergence for the postprocessed displacement $\vU_h^*$. The choice of the stabilization parameter $\tau$ affects the solution as in the 2D case.

The \emph{std} method achieves third-order convergence in displacement and (nearly) second-order convergence in the other variables. Both \emph{CSMFEM} and \emph{STP} methods display comparable accuracy, achieving second-order convergence in displacement and close-to-first-order convergence in the other variables.
The reduced accuracy for \emph{CSMFEM} in 3D relative to 2D is attributed to the use of lower-order finite element spaces for $p$ and $\tP$.
From both tables, we conclude that \emph{NDTNS} achieves the best accuracy among all methods in both 2D and 3D.

\begin{table}[tb]
\centering
    \resizebox{0.95\textwidth}{!}{
\begin{tabular}{ll ll ll ll ll ll}
\toprule
    method&   $h$   & $\|\vU-\vU_h\|$  & e.o.c
&  $\|p-p_h\|$& e.o.c    &  $\|\vek{F}-\vek{F}_h\|$& e.o.c   
    & $\|\vP-\vP_{h}\|$  & e.o.c  
    & $\|\vU-\vU_h^*\|$  & e.o.c 
    \\ \midrule
 & 0.25 &  7.43e-04 & - & 1.74e-03 & - & 5.46e-03 & - & 7.99e-03 & - & 1.11e-04 & - \\
\emph{NDTNS} $\tau=0$ & 0.25/2  & 1.08e-04 & 2.78 & 2.26e-04 & 2.95 & 8.19e-04 & 2.74 & 1.19e-03 & 2.75 & 7.65e-06 & 3.85 \\
 & 0.25/4  & 1.45e-05 & 2.9 & 3.17e-05 & 2.83 & 1.12e-04 & 2.87 & 1.66e-04 & 2.84 & 5.07e-07 & 3.92 \\
 & 0.25/8  & 1.88e-06 & 2.95 & 4.08e-06 & 2.96 & 1.47e-05 & 2.93 & 2.18e-05 & 2.93 & 3.28e-08 & 3.95 \\
 \midrule
 & 0.25 & 9.11e-04 & - & 3.57e-02 & - & 2.68e-02 & - & 9.44e-02 & - & 7.26e-04 & - \\
\emph{NDTNS} $\tau=100$ & 0.25/2 & 1.21e-04 & 2.91 & 5.47e-03 & 2.71 & 5.84e-03 & 2.2 & 1.56e-02 & 2.59 & 8.12e-05 & 3.16 \\
 & 0.25/4 & 1.50e-05 & 3.02 & 7.72e-04 & 2.83 & 1.05e-03 & 2.48 & 2.38e-03 & 2.72 & 7.26e-06 & 3.48 \\
 & 0.25/8 & 1.87e-06 & 3.0 & 1.10e-04 & 2.81 & 1.64e-04 & 2.68 & 3.53e-04 & 2.75 & 5.65e-07 & 3.68 \\
\midrule
 & 0.25 & 1.34e-03 & - & 1.33e-01 & - & 3.95e-02 & - & 3.74e-01 & - & 1.28e-03 & - \\
\emph{NDTNS} $\tau=100/h$ & 0.25/2 & 1.88e-04 & 2.84 & 3.42e-02 & 1.96 & 1.13e-02 & 1.81 & 9.69e-02 & 1.95 & 1.79e-04 & 2.84 \\
 & 0.25/4 & 2.30e-05 & 3.04 & 9.04e-03 & 1.92 & 2.93e-03 & 1.94 & 2.57e-02 & 1.91 & 2.15e-05 & 3.05 \\
 & 0.25/8 & 2.85e-06 & 3.01 & 2.40e-03 & 1.91 & 7.47e-04 & 1.97 & 6.92e-03 & 1.89 & 2.66e-06 & 3.02 \\
 \midrule 
 & 0.25 & 1.49e-03 & - & 3.06e-02 & - & 5.46e-02 & - & 1.15e-01 & - &   &   \\
\emph{CSMFEM} & 0.25/2  & 2.14e-04 & 2.8 & 8.82e-03 & 1.79 & 1.53e-02 & 1.83 & 3.31e-02 & 1.79 &   &   \\
 & 0.25/4  & 3.06e-05 & 2.8 & 2.66e-03 & 1.73 & 4.10e-03 & 1.9 & 1.00e-02 & 1.72 &   &   \\
 & 0.25/8  & 5.05e-06 & 2.6 & 6.81e-04 & 1.96 & 1.05e-03 & 1.97 & 2.87e-03 & 1.8 &   &   \\
 \midrule
 & 0.25 & 1.33e-03 & - & 8.05e-03 & - & 4.54e-02 & - & 5.06e-02 & - &   &   \\
\emph{std} & 0.25/2  & 1.82e-04 & 2.87 & 1.64e-03 & 2.29 & 1.23e-02 & 1.89 & 1.33e-02 & 1.92 &   &   \\
 & 0.25/4  & 2.37e-05 & 2.94 & 3.39e-04 & 2.28 & 3.19e-03 & 1.94 & 3.44e-03 & 1.95 &   &   \\
 & 0.25/8  & 3.02e-06 & 2.97 & 8.08e-05 & 2.07 & 8.16e-04 & 1.97 & 8.77e-04 & 1.97 &   &   \\
 \midrule
 & 0.25 & 6.44e-03 & - & 9.97e-02 & - & 1.11e-01 & - & 3.42e-01 & - &   &   \\
\emph{STP} & 0.25/2  & 1.35e-03 & 2.26 & 2.94e-02 & 1.76 & 4.86e-02 & 1.2 & 1.06e-01 & 1.69 &   &   \\
 & 0.25/4  & 3.21e-04 & 2.07 & 1.19e-02 & 1.3 & 2.43e-02 & 1.0 & 4.67e-02 & 1.18 &   &   \\
 & 0.25/8  & 7.96e-05 & 2.01 & 5.54e-03 & 1.11 & 1.23e-02 & 0.99 & 2.24e-02 & 1.06 &   &   \\
    \bottomrule
\end{tabular}
    }
\vspace{.3ex}
\caption{History of convergence for different methods with quadratic displacement approximations ($k=2$)
    for inflation of a cylindrical shell (2D), $\gamma=2$, with estimated order of convergence (e.o.c.).}
\label{tab:1}
\end{table}

\begin{table}[tb]
\centering
    \resizebox{0.95\textwidth}{!}{
\begin{tabular}{ll ll ll ll ll ll}
\toprule
    method&   $h$   & $\|\vU-\vU_h\|$  & e.o.c
&  $\|p-p_h\|$& e.o.c    &  $\|\vek{F}-\vek{F}_h\|$& e.o.c   
    & $\|\vP-\vP_{h}\|$  & e.o.c  
    & $\|\vU-\vU_h^*\|$  & e.o.c 
    \\ \midrule
 & 0.25 & 1.11e-03 & - & 2.75e-03 & - & 8.50e-03 & - & 1.53e-02 & - & 2.62e-04 & - \\
\emph{NDTNS} $\tau=0$ & 0.25/2  & 1.88e-04 & 2.56 & 4.68e-04 & 2.55 & 1.50e-03 & 2.5 & 2.87e-03 & 2.42 & 2.34e-05 & 3.49 \\
 & 0.25/4 &  2.52e-05 & 2.9 & 6.23e-05 & 2.91 & 2.12e-04 & 2.83 & 4.17e-04 & 2.78 & 1.62e-06 & 3.85 \\
 \midrule
 & 0.25 & 2.10e-03 & - & 6.90e-02 & - & 4.92e-02 & - & 2.47e-01 & - & 2.04e-03 & - \\
\emph{NDTNS} $\tau=100$ & 0.25/2 & 2.69e-04 & 2.97 & 1.23e-02 & 2.49 & 1.20e-02 & 2.04 & 4.82e-02 & 2.36 & 2.47e-04 & 3.04 \\
 & 0.25/4 & 2.68e-05 & 3.32 & 1.98e-03 & 2.64 & 2.42e-03 & 2.31 & 8.52e-03 & 2.5 & 2.16e-05 & 3.52 \\
 \midrule
 & 0.25 & 3.47e-03 & - & 2.17e-01 & - & 7.41e-02 & - & 8.06e-01 & - & 3.46e-03 & - \\
\emph{NDTNS} $\tau=100/h$ & 0.25/2 & 5.85e-04 & 2.57 & 6.51e-02 & 1.74 & 2.23e-02 & 1.73 & 2.53e-01 & 1.67 & 5.82e-04 & 2.57 \\
 & 0.25/4 & 6.91e-05 & 3.08 & 1.83e-02 & 1.83 & 5.73e-03 & 1.96 & 7.53e-02 & 1.75 & 6.85e-05 & 3.09 \\
 \midrule
 & 0.25  & 5.45e-03 & - & 1.69e-01 & - & 1.15e-01 & - & 4.53e-01 & - &   &   \\
\emph{CSMFEM} & 0.25/2  & 1.21e-03 & 2.17 & 8.55e-02 & 0.99 & 5.12e-02 & 1.17 & 2.30e-01 & 0.98 &   &   \\
 & 0.25/4  & 2.98e-04 & 2.03 & 4.75e-02 & 0.85 & 2.41e-02 & 1.09 & 1.37e-01 & 0.75 &   &   \\
 \midrule
 & 0.25  & 2.45e-03 & - & 1.26e-02 & - & 6.91e-02 & - & 8.23e-02 & - &   &   \\
\emph{std} & 0.25/2  & 4.11e-04 & 2.58 & 3.19e-03 & 1.98 & 2.18e-02 & 1.66 & 2.50e-02 & 1.72 &   &   \\
 & 0.25/4  & 5.46e-05 & 2.91 & 6.11e-04 & 2.39 & 6.09e-03 & 1.84 & 6.69e-03 & 1.9 &   &   \\
 \midrule
  & 0.25 &  5.11e-03 & - & 2.71e-01 & - & 1.09e-01 & - & 9.07e-01 & - &   &   \\
\emph{STP} & 0.25/2  & 1.35e-03 & 1.92 & 1.23e-01 & 1.14 & 4.92e-02 & 1.15 & 4.76e-01 & 0.93 &   &   \\
 & 0.25/4 & 3.16e-04 & 2.09 & 7.84e-02 & 0.65 & 2.25e-02 & 1.13 & 3.40e-01 & 0.48 &   &   \\
\bottomrule
\end{tabular}
    }
\vspace{.3ex}
\caption{History of convergence for different methods with quadratic displacement approximations ($k=2$)
    for inflation of a hollow sphere (3D), $\gamma=1.5$, with estimated order of convergence (e.o.c.).}
\label{tab:2}
\end{table}

\subsubsection{Cook's membrane}
\label{subsubsec:Cooks membrane}

\begin{figure}[ht!]
	\centering
	\begin{tikzpicture}[scale=0.7]
    \draw[thick, fill=lightgray] (0,0) -- (4.8,4.4) -- (4.8,6.0) -- (0,4.4) -- cycle;
    \draw[fill=darkgray] (0,-0.2) -- (0,4.6) -- (-0.4,4.6) -- (-0.4,-0.2) -- cycle;
    \draw[dashed] (-0.6,4.4) -- (4.8,4.4);
    \draw[dashed] (-0.6,6) -- (4.8,6);
    \draw[dashed] (4.8,-0.2) -- (4.8,4.4);

    \draw[<->] (-0.6,0) -- (-0.6,4.4) node[midway, left] {$44$mm};
    \draw[<->] (-0.6,4.4) -- (-0.6,6.0) node[midway, left] {$16$mm};
    \draw[<->] (0,-0.2) -- (4.8,-0.2) node[midway, below] {$48$mm};
    \draw[->] (5.2,4.8) -- (5.2,5.6) node[midway, right] {$\TG$};
    \draw[fill] (4.8,6.0) circle [radius=0.1] node[above] {$A$};
  \end{tikzpicture}\hspace*{2cm}
  \begin{tikzpicture}[scale=0.7,line join = round]
    \draw[dashed] (0,0,0) -- (0,4.4,0) ;
    \draw[thick,fill=lightgray, opacity=0.5] (0,0,1) -- (4.8,4.4,1) -- (4.8,6.0,1) -- (0,4.4,1) -- cycle;

    \draw[thick,fill=lightgray, opacity=0.5] (4.8,4.4,1) -- (4.8,4.4,0) -- (4.8,6.0,0) -- (4.8,6.0,1) -- cycle;
    \draw[thick,fill=lightgray, opacity=0.5] (4.8,6.0,1) -- (4.8,6.0,0) -- (0,4.4,0)  -- (0,4.4,1) -- cycle;

    \draw[fill=darkgray] (0,-0.2,1) -- (0,4.6,1) -- (-0.4,4.6,1) -- (-0.4,-0.2,1) -- cycle;
    \draw[fill=darkgray] (0,4.6,1) --(0,4.6,0) -- (-0.4,4.6,0) -- (-0.4,4.6,1) -- cycle;
    \draw[fill=darkgray] (0,4.6,1) --(0,4.6,0) -- (0,4.4,0) -- (-0,4.4,1) -- cycle;
    \draw[fill=darkgray] (0,-0.2,1) --(0,-0.2,0) -- (0,0,0) -- (0,-0,1) -- cycle;

    \draw[fill] (4.8,6.0,1) circle [radius=0.1] node[below left] {$A$};
    \draw[<->] (-0.7,4.4,1) -- (-0.7,4.4,0) node[midway, above left] {$10$mm};
  \end{tikzpicture}
	
	\caption{Geometry of 2D (left) and extended 3D (right) Cook's membrane examples.}
	\label{fig:cooks_membrane_geo}
\end{figure}
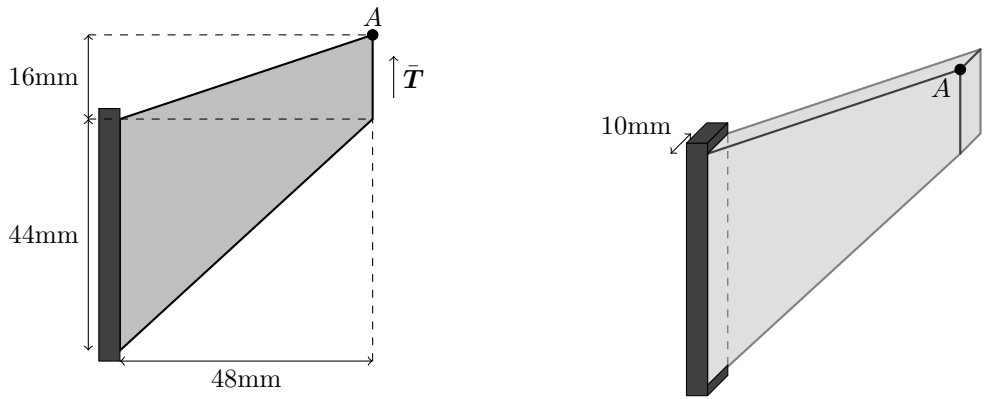
Next, we consider the Cook's membrane problem proposed in \cite{Shojaei2018,Shojaei2019} in two and three dimensions. The membrane is clamped on the left and a shear force is applied at the right boundary. We consider $\mu=1$ N/mm$^2$. The force acts vertically, $\TG=0.5 \vek{e}_y$, in both examples. 
The deflection of point $A$ is the quantity of interest. The geometry can be found in Figure~\ref{fig:cooks_membrane_geo} (left for 2D, right for 3D). We use structured meshes, where $n\times n$ quadrilaterals are divided into triangles in 2D with $n=4$, $8$, $16$, and $32$. In 3D, we exploit the symmetry in the $z$-direction and use one layer. The singularity at the top left vertex (edge) in 2D (3D), appearing due to the change of boundary conditions, is known to prevent satisfying results for the standard method on coarse meshes. In both configurations, we choose the stabilization $\tau = 100\mu/h$ for \emph{NDTNS} in the region $x^2+(y-0.44)^2<0.1^2$ around the top left singularity and $\tau=100\mu$ elsewhere.

In Figure~\ref{fig:cook2d_PK} and Figure~\ref{fig:cook3d_PK} the norm of the 1\textsuperscript{st}~Piola--Kirchhoff stress tensor on the deformed configuration is presented for the four methods for the $n=16$ grid. One can clearly observe the singularity at the top left corner. Around this corner, the elements get heavily compressed and rotated. The \emph{std} and \emph{CSMFEM} methods produce a distorted mesh around the singularity and both methods fail to converge on the finest $n=32$ mesh. The \emph{STP} method's solution always converges. However, it is less smooth in 2D. In 3D the \emph{std} and \emph{CSMFEM} solutions have distorted meshes and again fail to converge on the finest level. The \emph{NDTNS} method produces appealing deformations in both settings.

In Table~\ref{tab:cook} the deflection at point $A$ is displayed for different meshes. The values of all methods are in agreement when the methods converge.

\begin{figure}[ht!]
    \centering
    \includegraphics[width=0.9\linewidth]{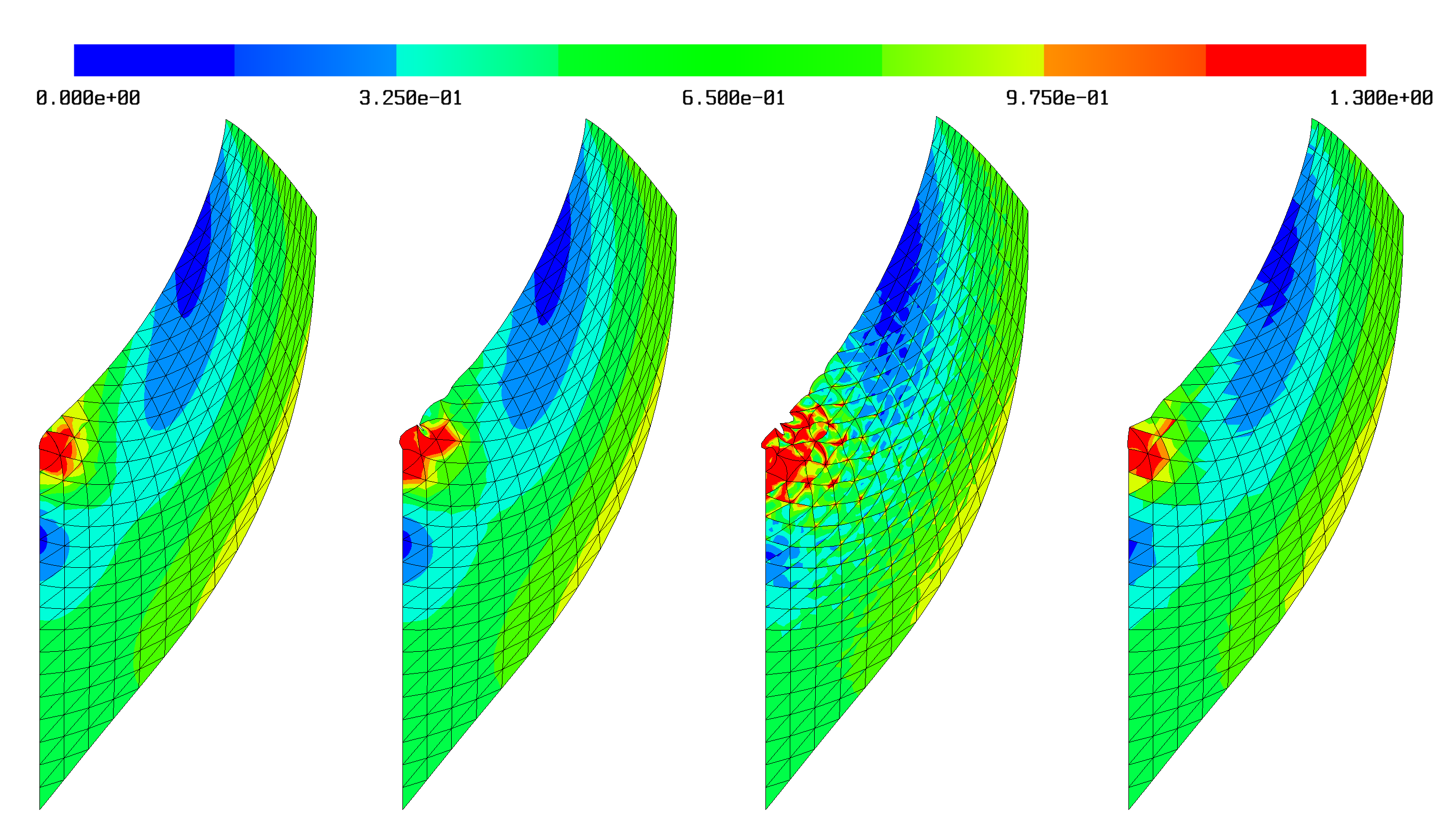}
  \caption{Norm of 1\textsuperscript{st}~Piola--Kirchhoff stress tensor of 2D Cook's membrane on final configuration. $n=16$.  From left to right: \emph{NDTNS}, \emph{std}, \emph{CSMFEM}, \emph{STP}.}
    \label{fig:cook2d_PK}
\end{figure}

\begin{figure}[ht!]
    \centering
    \includegraphics[width=0.9\linewidth]{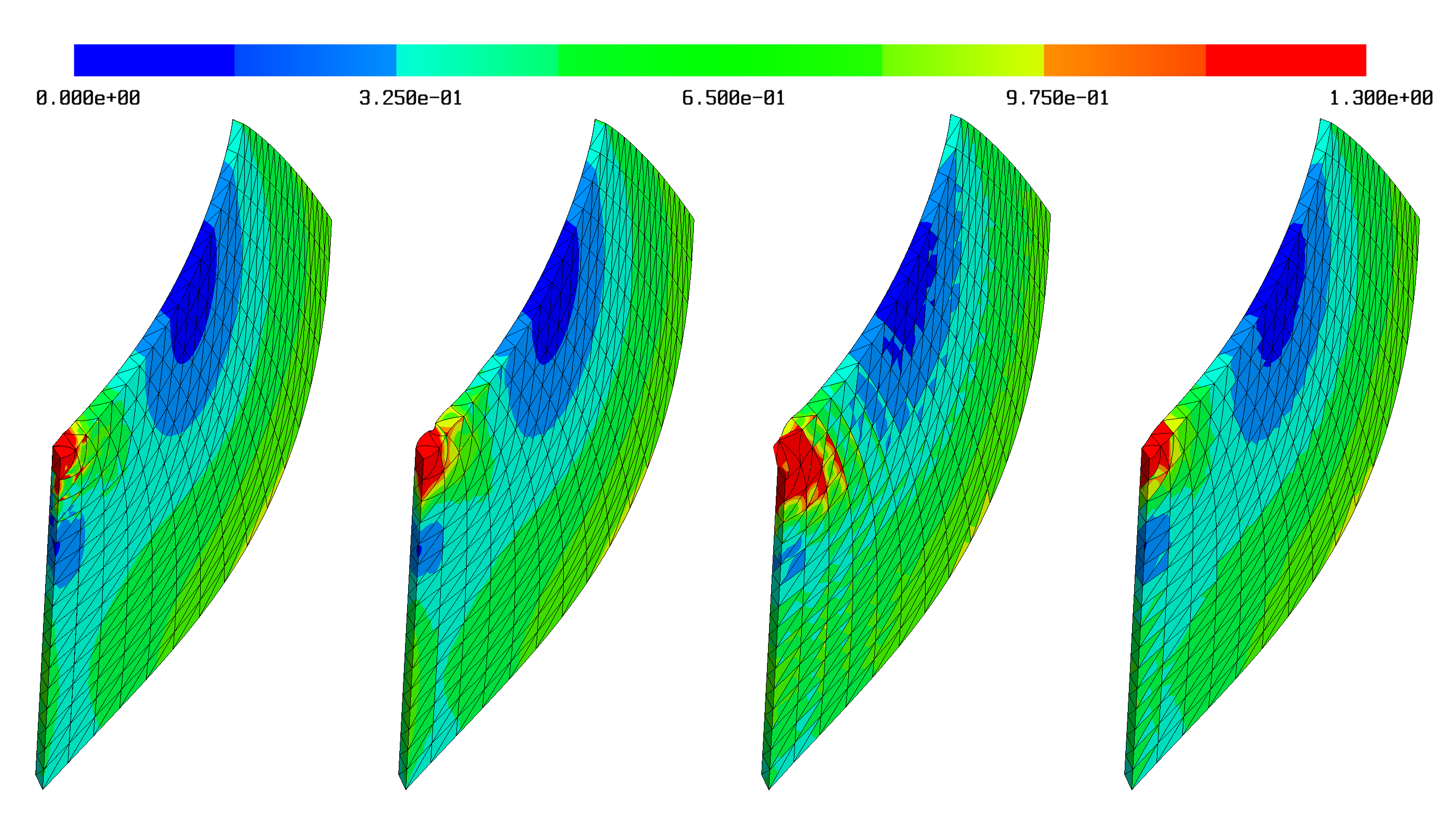}
    
    \caption{Norm of 1\textsuperscript{st}~Piola--Kirchhoff stress tensor of 3D Cook's membrane on final configuration. $n=16$. From left to right: \emph{NDTNS}, \emph{std}, \emph{CSMFEM}, \emph{STP}.}
    \label{fig:cook3d_PK}
\end{figure}

\begin{table}[tb]
\centering
\begin{tabular}{ll ll}
\toprule
    method&   $n$   & $u_x(A)$  & $u_y(A)$
    \\ \midrule
 & 4 & -0.24939 & 0.24071 \\
\emph{NDTNS} & 8 & -0.25142 & 0.24168 \\
 & 16 & -0.25243 & 0.24228 \\
 & 32 & -0.25316 & 0.24276 \\
 \midrule
 & 4 & -0.25227 & 0.24263 \\
\emph{CSMFEM} & 8 & -0.25387 & 0.24305 \\
 & 16 & -0.27156 & 0.24608 \\
 & 32 & F 0.73 &  \\
  \midrule
 & 4 & -0.25264 & 0.24172 \\
\emph{std} & 8 & -0.25438 & 0.24273 \\
 & 16 & -0.25623 & 0.24325 \\
 & 32 & F 0.95 &  \\
  \midrule
 & 4 & -0.26769 & 0.24678 \\
\emph{STP} & 8 & -0.25977 & 0.24458 \\
 & 16 & -0.25635 & 0.24378 \\
 & 32 & -0.25509 & 0.24358 \\
 \bottomrule
\end{tabular}\hspace{2cm}
\begin{tabular}{ll ll}
\toprule
    method&   $n$   & $u_x(A)$  & $u_y(A)$
    \\ \midrule
 & 4 & -0.26665 & 0.25303 \\
\emph{NDTNS} & 8 & -0.27389 & 0.25699 \\
 & 16 & -0.27639 & 0.25884 \\
 & 32 & -0.27734 & 0.25970 \\
  \midrule
 & 4 & -0.28144 & 0.26222 \\
\emph{CSMFEM} & 8 & -0.28506 & 0.26370 \\
 & 16 & -0.28713 & 0.26492 \\
 & 32 & F 0.78 &  \\
  \midrule
 & 4 & -0.27051 & 0.25558 \\
\emph{std} & 8 & -0.27527 & 0.25816 \\
 & 16 & -0.27740 & 0.25937 \\
 & 32 & F 0.51 &  \\
  \midrule
 & 4 & -0.27381 & 0.25722 \\
\emph{STP} & 8 & -0.27751 & 0.25947 \\
 & 16 & -0.27819 & 0.26003 \\
 & 32 & -0.27843 & 0.26036 \\
\bottomrule
\end{tabular}
\vspace{.3ex}
\caption{2D (left) and 3D (right) Cook's membrane deflection of point $A$. F denotes that the method failed to converge at the given load step in $[0,1]$.}
\label{tab:cook}
\end{table}

\subsubsection{Compression of a 3D incompressible block}
\label{subsubsec:Cpmpression block}

We consider a block under inhomogeneous compression \cite{Shojaei2019}, adapted to the full incompressible case, depicted in Figure~\ref{fig:compr_block}. The material parameter is chosen to be $\mu=80.194$N/mm$^2$. At the top boundary the horizontal deflections are constrained to be zero, whereas at the bottom boundary the vertical displacement is set to zero. The other boundaries are traction-free. Due to symmetry, we consider a quarter of the block and prescribe symmetry boundary conditions at the arising boundaries. Many finite elements suffer from hourglass instabilities for this example \cite{Reese07}. 
Nominal vertical traction forces $\TG=(0,0, -f)$, with $f$ up to $1000$ N/mm$^2$, are considered such that about 90\% compression is achieved, which makes this example extremely challenging. Further, due to the inhomogeneous compression, singularities occur at the edges, where the forces start. The vertical deflection at point $A=(0,0,1)$ is the quantity of interest. 

Due to the strong compression, we choose $\tau=800\mu/h$ at the area of compression with a surrounding layer of $0.1$ for \emph{NDTNS}. On the remaining, less compressed part we set $\tau=800\mu$.
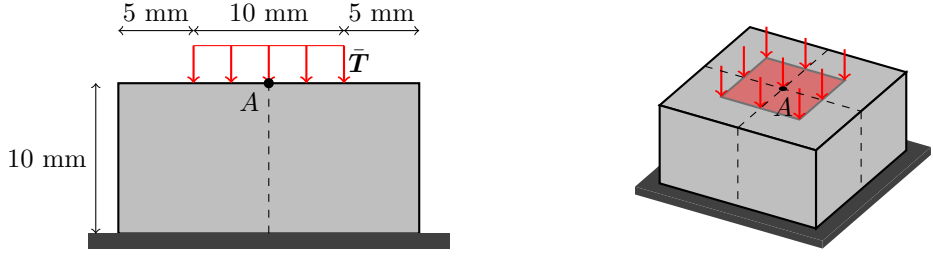
\begin{figure}[ht!]
	\centering

   \begin{tikzpicture}[scale=2]
     \fill[lightgray] (0, 0) rectangle (2, 1);
     \draw[thick] (0, 0) rectangle (2, 1);
    
     \fill[darkgray] (-0.2, -0.1) rectangle (2.2, 0);
    
     \draw[red,-] (0.5,1.25) -- (1.5,1.25);
     \foreach \x in {0.5, 0.75, 1.0, 1.25, 1.5} {
         \draw[red, thick, ->] (\x, 1.25) -- (\x, 1);
     }
    
     \node at (1, 1) [circle, fill=black, inner sep=1pt] {};
     \draw[fill] (1,1) circle [radius=0.03] node[below left] {$A$};
    
     \draw[<->] (-0.15, 0) -- (-0.15, 1) node[midway, left] {10 mm};
     \draw[<->] (0, 1.35) -- (0.5, 1.35) node[midway, above] {5 mm};
     \draw[<->] (0.5, 1.35) -- (1.5, 1.35) node[midway, above] {10 mm};
     \draw[<->] (1.5, 1.35) -- (2, 1.35) node[midway, above] {5 mm};

     \draw[dashed] (1,0) --(1,1);
    
     \node at (1.6, 1.15) {$\TG$};
 \end{tikzpicture}\hspace*{2cm}
\tdplotsetmaincoords{60}{120}
\begin{tikzpicture}[tdplot_main_coords,scale=1.2]
  \fill[darkgray] (-0.2,-0.2,0) -- (2.2,-0.2,0) -- (2.2,2.2,0) -- (-0.2,2.2,0) -- cycle;
  \fill[darkgray] (-0.2,2.2,-0.1) -- (2.2,2.2,-0.1) -- (2.2,2.2,0) -- (-0.2,2.2,0) -- cycle;
  \fill[darkgray] (2.2,2.2,-0.1) -- (2.2,-0.2,-0.1) -- (2.2,-0.2,0) -- (2.2,2.2,0) -- cycle;
  
  \draw[thick, fill=lightgray] (0,0,1) -- (2,0,1) -- (2,2,1) -- (0,2,1) -- cycle; 
  \draw[thick, fill=red, opacity=0.4] (0.5,0.5,1) -- (1.5,0.5,1) -- (1.5,1.5,1) -- (0.5,1.5,1) -- cycle;
  \draw[thick, fill=lightgray] (0,2,0) -- (0,2,1) -- (2,2,1) -- (2,2,0) -- cycle; 
  \draw[thick, fill=lightgray] (2,0,0) -- (2,2,0) -- (2,2,1) -- (2,0,1) -- cycle;

  \foreach \x in {0.5, 1.0, 1.5}
      \foreach \y in {0.5, 1.0, 1.5}
      {
          \draw[red, thick, ->] (\x,\y,1.4) -- (\x,\y,1);
      }
  
  \draw[dashed] (0,1,1) -- (2, 1, 1);
  \draw[dashed] (1,0,1) -- (1, 2, 1);
  \draw[dashed] (2,1,1) -- (2, 1, 0);
  \draw[dashed] (1,2,1) -- (1, 2, 0);

  \draw[fill] (1,1,1) circle [radius=0.04] node[below] {$A$};
\end{tikzpicture}
	\caption{Geometry of compression of block example.}
	\label{fig:compr_block}
\end{figure}

We consider three sets of unstructured meshes with 1590, 4189, and 13219 elements. Both \emph{NDTNS} and \emph{STP} methods produced stable results till the final load with $f=1000$. 
However, \emph{std} failed between  $f\in (300, 430)$, and \emph{CSMFEM} failed between $f\in (550, 600)$ depending on the mesh, see Table~\ref{tab:block3d}. 

\begin{table}[tb]
\centering
\begin{tabular}{lll}
\toprule
method  &  ne   &  $u_z(A)$  \\
\midrule
 & 1590 & -0.76756 \\
\emph{NDTNS} & 4189 & -0.76270 \\
 & 13219 & -0.76175 \\
 \midrule
 & 1590 & -0.77941 \\
\emph{CSMFEM} & 4189 & -0.76770 \\
 & 13219 & -0.76425 \\
 \midrule
 & 1590 & F, 0.941 \\
\emph{std} & 4189 & F, 0.754 \\
 & 13219 & -0.76007 \\
 \midrule
 & 1590 & -0.76428 \\
\emph{STP} & 4189 & -0.76145 \\
 & 13219 & -0.76125 \\
\bottomrule
\end{tabular}
\hspace{2cm}
\begin{tabular}{lll}
\toprule
method  &  ne   &  $u_z(A)$  \\
\midrule
 & 1590 & -0.90877 \\
\emph{NDTNS} & 4189 & -0.90144 \\
 & 13219 & -0.90085 \\
 \midrule
 & 1590 & F, 0.550 \\
\emph{CSMFEM} & 4189 & F, 0.554 \\
 & 13219 & F, 0.592 \\
 \midrule
 & 1590 & F, 0.376 \\
\emph{std} & 4189 & F, 0.302 \\
 & 13219 & F, 0.428 \\
 \midrule
 & 1590 & -0.89263 \\
\emph{STP} & 4189 & -0.90342 \\
 & 13219 & -0.90171 \\
\bottomrule
\end{tabular}
\vspace{.3ex}
\caption{3D incompressible block deflection at point $A$. Left: $\TG = (0,0,-400)$. Right: $\TG = (0,0,-1000)$. F denotes that the method failed to converge at the given load step in $[0,1]$. ne denotes the number of mesh elements.}
\label{tab:block3d}
\end{table}

The reason for failure is the appearance of a negative Jacobian determinant. We plot in Figure~\ref{fig:comp1} the Jacobian determinant on the deformed domain for load $f=400$ for the three stable methods on the mesh with 4189 elements. 
It is observed that all three methods produce similar deformations. \emph{NDTNS} produces the smallest incompressibility error, with the point-wise
Jacobian determinant $J_h=\det(\tF_h)\in[0.951,1.126]$, cf. Table~\ref{tab:block3d_det}. Note that the auxiliary deformation gradient field $\tF_h$ is considered, where the incompressibility condition is enforced by \eqref{eq:NDTNS-strong-formulation_c}. We also measure the $L^2$-projection onto the element-wise constants $J_{0,h}=\Pi^0_{L^2}(\det(\tF_h))\in[0.996,1.009]$, which smooths out possible outliers in small regions.
\emph{STP} method has $J_h=\det(\Grad \vU_h+\tI)\in [0.110,1.873]$ and $J_{0,h}=\Pi^0_{L^2}(\det(\Grad \vU_h+\tI))\in [0.9451,1.061]$. This clearly shows that the incompressibility constraint is not fulfilled point-wise but closely in a weak sense.  
\emph{CSMFEM} has strong point-wise deviations with $J_h=\det(\tK_h+\tI)\in [-2.721,8.887]$, however, the projected determinant $J_{0,h}=\Pi^0_{L^2}(\det(\tK_h+\tI))\in [0.823,1.192]$ is acceptable. For \emph{std}, both quantities reach zero and below so that the method fails.

Due to the large incompressibility errors, the \emph{CSMFEM} method does not produce stable results when the load increases to $f=1000$. Hence, we plot in Figure~\ref{fig:comp2} the Jacobian determinant on the deformed domain for the final load $f=1000$ for the methods \emph{NDTNS}
and \emph{STP} on the mesh with 4189 elements. It is observed that \emph{NDTNS} still has a better incompressibility error, with $J_h\in [0.503,1.406]$ and $J_{0,h}=[0.9733,1.025]$, 
than \emph{STP}, which produces $J_h\in [-1.189,4.140]$ and $J_{0,h}\in[0.7241,1.294]$.

It is also clear that larger incompressibility errors occur near the singular edges on the top boundary. In Figure~\ref{fig:comp3} and Figure~\ref{fig:comp4}, the norm of the 1\textsuperscript{st}~Piola--Kirchhoff stress tensor is displayed for final loads $f=400$ and $f=1000$, respectively.

\begin{table}[tb]
\centering

\resizebox{0.49\textwidth}{!}{
\begin{tabular}{llll}
\toprule
method  &  ne   &  $J(u/F)$  &  $J_0(u/F)$ \\
\midrule
 & 1590 &  (0.946, 1.063)  &  (0.9961, 1.004) \\
\emph{NDTNS} & 4189 &  (0.951, 1.126)  &  (0.996, 1.009) \\
 & 13219 &  (0.959, 1.042)  &  (0.9962, 1.005) \\
 \midrule
 & 1590 &  (-1.940, 7.068)  &  (0.7938, 1.117) \\
\emph{CSMFEM} & 4189 &  (-2.721, 8.887)  &  (0.823, 1.192) \\
 & 13219 &  (-2.192, 6.224)  &  (0.8845, 1.098) \\
 \midrule
 & 1590 &  (-1.357, 3.020)  &  (0.0, 1.511) \\
\emph{std} & 4189 &  (-1.089, 2.015)  &  (0.0, 1.305) \\
 & 13219 &  (-1.807, 2.937)  &  (0.0756, 1.597) \\
 \midrule
 & 1590 &  (-0.162, 2.162)  &  (0.9497, 1.071) \\
\emph{STP} & 4189 &  (0.110, 1.873)  &  (0.9451, 1.061) \\
 & 13219 &  (0.017, 1.834)  &  (0.9187, 1.061) \\
\bottomrule
\end{tabular}}
\resizebox{0.489\textwidth}{!}{
\begin{tabular}{llll}
\toprule
method  &  ne   &  $J(u/F)$  &  $J_0(u/F)$ \\
\midrule
 & 1590 &  (0.575, 1.459)  &  (0.9689, 1.043) \\
\emph{NDTNS} & 4189 &  (0.503, 1.406)  &  (0.9733, 1.025) \\
 & 13219 &  (0.411, 1.467)  &  (0.9671, 1.029) \\
 \midrule
 & 1590 &  (-17.68, 25.72)  &  (0.0001, 1.831) \\
\emph{CSMFEM} & 4189 &  (-17.38, 28.94)  &  (0.0005, 1.671) \\
 & 13219 &  (-12.43, 25.33)  &  (0.2101, 1.450) \\
 \midrule
 & 1590 &  (-1.357, 3.020)  &  (0.0, 1.511) \\
\emph{std} & 4189 &  (-1.089, 2.015)  &  (0.0, 1.305) \\
 & 13219 &  (-2.121, 3.131)  &  (0.0, 1.689) \\
 \midrule
 & 1590 &  (-1.981, 4.041)  &  (0.4498, 1.290) \\
\emph{STP} & 4189 &  (-1.189, 4.140)  &  (0.7241, 1.294) \\
 & 13219 &  (-0.845, 3.311)  &  (0.7028, 1.318) \\
\bottomrule
\end{tabular}}
\caption{3D incompressible block minimal and maximal values of determinant computed by the deformation gradient or displacement $J(u/F)$, and the $L^2$-projection onto element-wise constants $J_0(u/F)$. Left: $\TG = (0,0,-400)$. Right: $\TG = (0,0,-1000)$. For methods that fail to converge, the values at the last converged configuration are displayed. ne denotes the number of mesh elements.}
\label{tab:block3d_det}
\end{table}

\begin{figure}[ht!]
    \centering
    \includegraphics[width=1\linewidth]{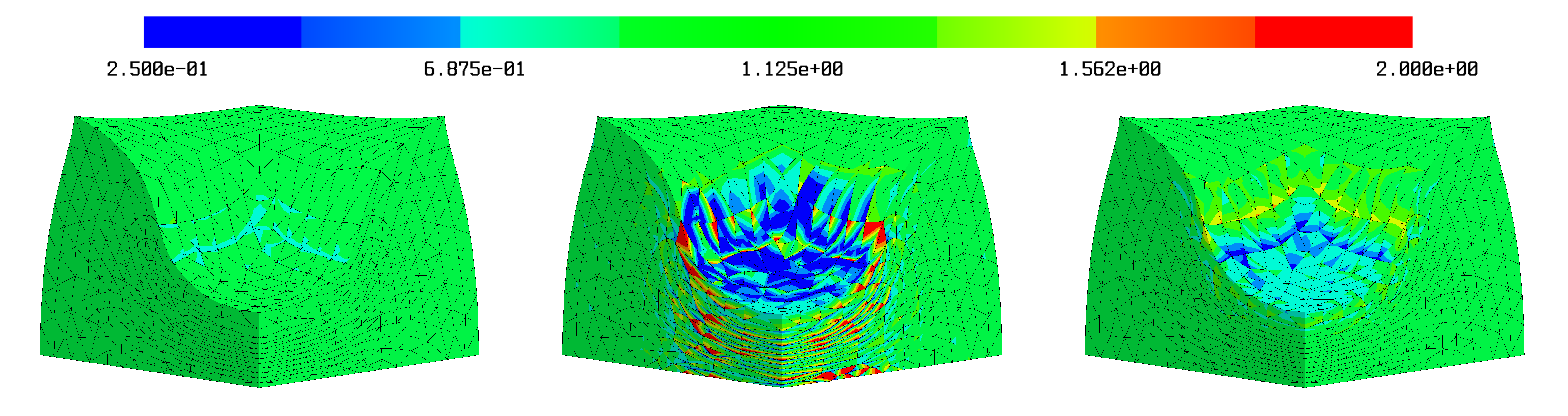}
    
    \caption{Jacobian determinant $J=\det(\tF)$ on deformed domain for block compressible problem with load $T=(0,0,-400)$. Left: \emph{NDTNS}. Middle: \emph{CSMFEM}. Right: \emph{STP}.}
    \label{fig:comp1}
\end{figure}

\begin{figure}[ht!]
    \centering
    \includegraphics[width=0.8\linewidth]{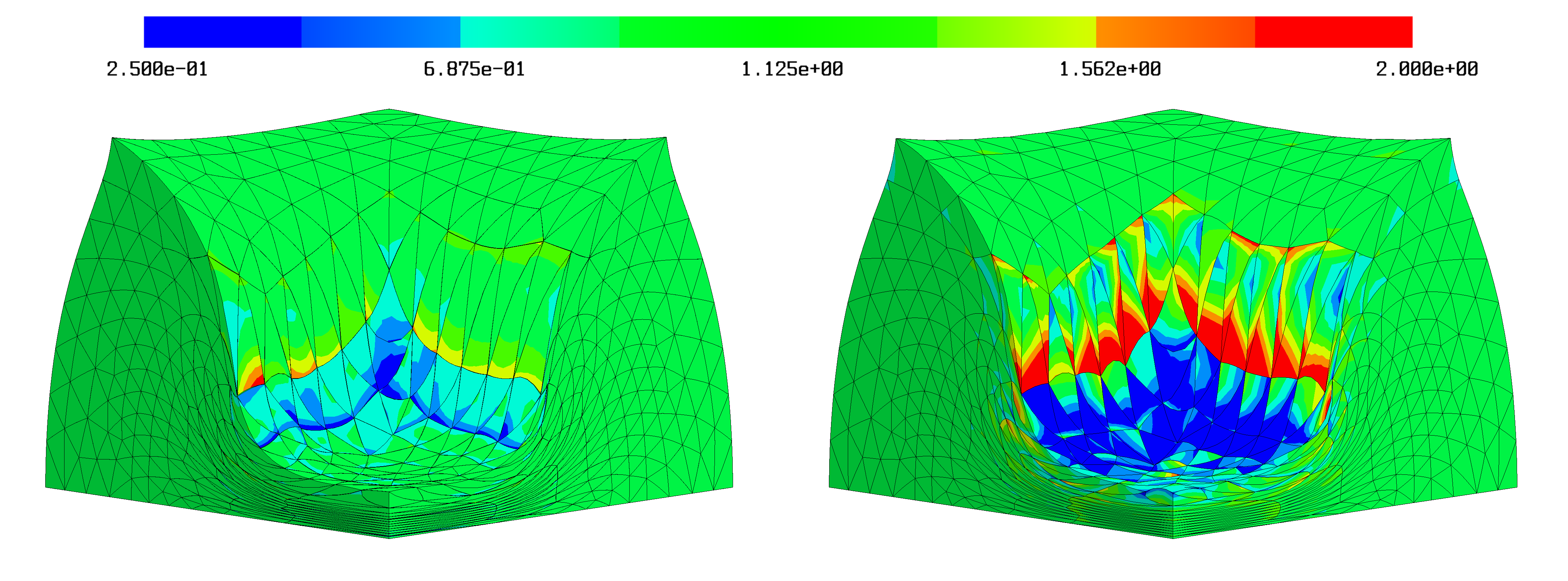}
   
    \caption{Jacobian determinant $J=\det(\tF)$ on deformed domain for block compressible problem
    with load $\vT=(0,0,-1000)$. Left: \emph{NDTNS}. Right: \emph{STP}.}
    \label{fig:comp2}
\end{figure}

\begin{figure}[ht!]
    \centering
    \includegraphics[width=1\linewidth]{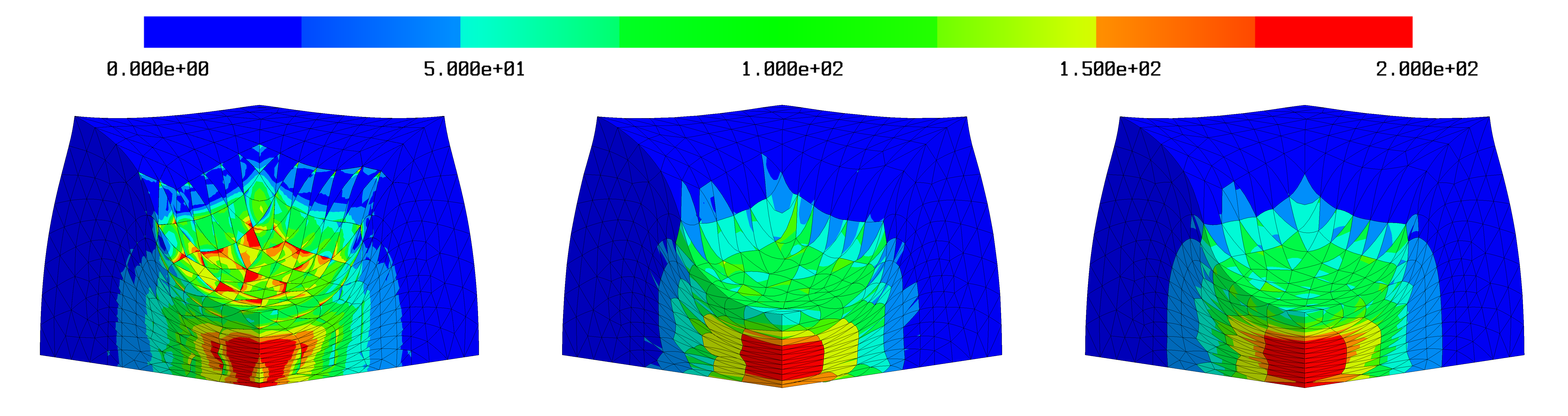}
   
    \caption{Norm of 1\textsuperscript{st}~Piola--Kirchhoff stress tensor on the deformed domain for block compressible problem with load $\vT=(0,0,-400)$.  Left: \emph{NDTNS}. Middle: \emph{CSMFEM}. Right: \emph{STP}.}
    \label{fig:comp3}
\end{figure}

\begin{figure}[ht!]
    \centering
    \includegraphics[width=0.8\linewidth]{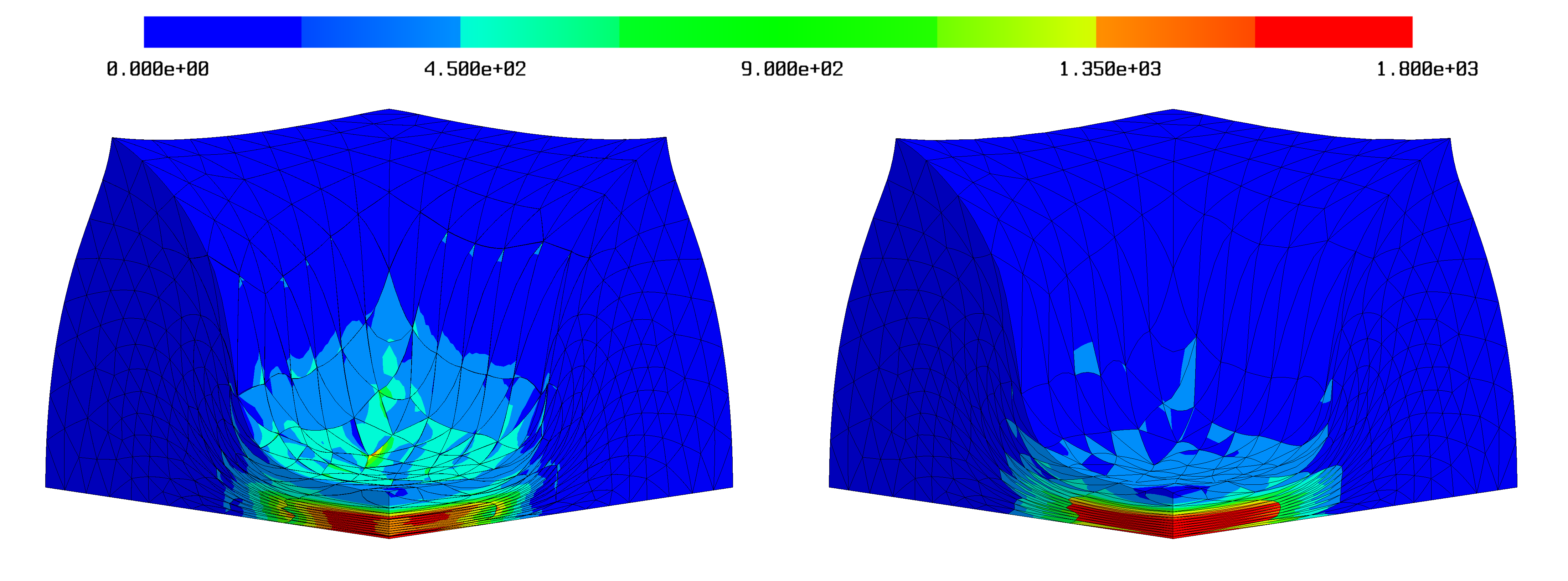}
    \caption{Norm of 1\textsuperscript{st}~Piola--Kirchhoff stress tensor on deformed domain for block compressible problem
    with load $\vT=(0,0,-1000)$. Left: \emph{NDTNS}. Right: \emph{STP}.}
    \label{fig:comp4}
\end{figure}

\section{Conclusion}
\label{sec:conclusion}
We have presented a novel mixed finite element method for solving incompressible large deformation hyperelasticity problems. By adding the deformation gradient as an additional strain unknown, we were able to extend the mass-conserving mixed stress (MCS) method, which has been successfully applied for pressure-robust and mass-conserving discretizations of the Stokes equations, to incompressible finite elasticity. The key point was the computation of a discrete Riesz representative of the arising distributional deformation gradient, the square-integrable auxiliary deformation gradient. Element-wise static condensation can be applied to solve a system that consists of only displacement-based degrees of freedom. We investigated a (quasi-)Newton-Raphson solving scheme with stabilization options. We presented a comprehensive comparison of the proposed method with the CSMFEM, a Taylor--Hood-like, and an STP-like method of nonlinear incompressible elasticity on several challenging numerical benchmark examples. The proposed method showed the best balance between convergence and stability among the four investigated methods.

A stability term well-known from hybrid discontinuous Galerkin (HDG) methods has been established to guarantee stability in regions of massive deformations, especially strong compression. Future research will investigate an optimal choice of the stabilization term and possible automatic adaptive stabilization procedures to guarantee the optimal (super-)convergence of the displacement and the stress fields.

\section*{Acknowledgements}
Guosheng Fu acknowledges support by the National Science Foundation (USA) Grant DMS-2410741.
Michael Neunteufel acknowledges support by the National Science Foundation (USA) Grant DMS-219958.

\bibliographystyle{acm}
\bibliography{cites.bib}
  
\end{document}